\title[Subvarieties of the  hyperelliptic moduli]{Subvarieties of the  hyperelliptic moduli determined by group actions}
\author{T. Shaska}
\address{Department of Mathematics and Statistics, Oakland University, Rochester, MI, 48309-4485, USA.}
\email{shaska@oakland.edu}
\subjclass{14Q05, 14Q15, 14R20, 14D22}
\keywords{Hyperelliptic curves, automorphism groups.}%%% Theorem Like Envirouments
\newtheorem{thm}{Theorem}
\newtheorem{lemma}[thm]{Lemma}
\newtheorem{remark}[thm]{Remark}
\newtheorem{ex}{Example}
\def\Z{\mathbb Z}
\def\C{\mathbb C}
\def\bP{\mathbb P}
\def\L{{\mathcal H}_g (G, \sigma)}
\def\H{\mathcal H}
\def\M{\mathcal M}
\def\w{\widetilde}
\def\l{\lambda}
\def\s{\sigma}
\def\a{\alpha}
\def\b{\beta}
\def\P{\mathcal W}
\def\e{\xi_5}
\def\iso{\equiv}
\def\o{\otimes}
\def\w{\omega}
\def\g{\gamma}
\def\iso{{\, \cong\, }}
\def\<{\langle}
\def\>{\rangle}
\def\Aut{\mbox {Aut}}
\def\bAut{\overline {\mbox {Aut}}}
\def\G{\overline G}
\def\ff{\phi}
\def\f{\Phi}
\def\X{\mathcal X}
\def\d{\delta}
\newenvironment{mat2}{\left(\begin{array}{cc}}{\end{array}\right)}
\def\bC{{\bf C}}
\def\nf{\Psi}
\def\df{\Upsilon}
\def\div{\mathfrak d}
\def\ddiv{\bar \div}
\begin{document}

\begin{abstract}
Let $\H_g$ be the moduli space of genus $g$ hyperelliptic curves. In this note, we study the locus $\L$ in
$\H_g$ of curves admitting a $G$-action of given ramification type $\s$    and inclusions between such loci.
For each genus we determine the list of all possible groups, the inclusions among the loci, and the
corresponding equations of the generic curve in $\L$. The proof of the results is based solely on
representations of finite subgroups of $PGL_2 (\C)$ and the Riemann-Hurwitz formula.
\end{abstract}

\maketitle

\section{Introduction}
%&&&&&&&&&&&&&&&&&&&&&&&&&&&&&&&&&&&&&&&&&&&&&&&&&&&&&&&&&&&&&&&&
Let $\H_g$ be the moduli space of genus $g$ hyperelliptic curves. We study the locus $\L$ in $\H_g$ of curves
admitting a $G$-action of given ramification type $\s$. All components of $\L$ have the same dimension which
depends only on the signature of the $G$-action. Restricting the action to a subgroup $H$ of $G$ yields an
inclusion of $\L$ into the corresponding locus $\H_g (H, \s)$ for the action of $H$. In this paper we study
all possible loci $\L$, the inclusions between such loci, and determine an equation for a generic curve $C$
in $\L$.   This is the first part of the two paper sequence, the second of which will consider such problem
for hyperelliptic curves defined over a field of characteristic $p > 0$. The main goal of this paper is
twofold: first to give a unified treatment of automorphisms groups of hyperelliptic curves and the loci they
determine in characteristic zero,  and second to provide the motivation needed for studying such problem in
characteristic $p > 0$. While some of the results of this paper are scattered in the literature we provide a
unified approach which is algebraic and is based solely on representations of finite subgroups of $PGL_2
(\C)$ and the Riemann-Hurwitz formula.

In section 2, we give a brief introduction on hyperelliptic curves and  their automorphism groups. This
material can be found in \cite{BS, Bu, issac, jaa} among many other places in the literature.   For a given
hyperelliptic curve $\X$, defined over $k$,  with automorphism group $G$, the reduced automorphism group  is
$\G:=G/\< w\>$, where $w$ is the hyperelliptic involution. This group $\G$ is embedded in $PGL_2(k)$ and
therefore is one of $\Z_n D_n, A_4, S_4, A_5$. $\G$ acts on e genus 0 field $k(x)$. We determine a rational
function $\phi(x)$ that generates  the fixed field $k(x)^{\G}$ in all cases. Using only this rational
function we are able to determine the parametric equation of each family $\H_g (G, \s)$ (cf. Section~4).
Different decompositions of $\phi (x)$ give different decompositions of $f(x)$ in the equations of the
hyperelliptic curve $y^2=f(x)$.  An equation of an hyperelliptic curve with an extra automorphism of order
$n$ can be written as $y^2=f(x^n)$ or $y^2=x f(x^n)$.  This corresponds to a decomposition of the rational
function $\phi (x)$ in $x^n$. For algorithms on decomposing rational functions one can check \cite{G} and the
references from there. Furthermore, for each fixed $g$ we give a formula for the number of automorphism
groups that occur.

In section 3, we discuss the locus $\H_g (G, \s)$. First, we determine the signatures of the covers $\psi:
\X_g \to \bP_1$. The moduli space of such covers with fixed group $G$ and ramification $\s$ is a Hurwitz
space $\H$. There is an obvious map from $\H$ to the hyperelliptic moduli of curves $\H_g$. We denote the
image of this map  by $\H_g (G, \s)$ which is a subvariety of $\H_g$. The dimension of $\H_g (G, \s)$ is
determined solely by the signature $\s$. For each $g \geq2 $ we list all possible groups, their signatures,
and the dimension of the locus $\H_g (G, \s)$. In section 4, we determine the equations of the families of
curves for a given group. This is determined using the rational function $\phi (x)$ from section 2.
Weierstrass points of the curve are points in the fibers $\phi^{-1} (\l)$, where $\l$ is a branch point of
$\phi (x)$. Such branch points can be determined easily when $\phi (x)$ is known. For each group an equation
for the family of curves is determined.

In section 5, we discuss the inclusions among the loci $\H_g (G, \s)$. We implement a program that for each
genus $g\geq 2$ determines the list of groups which occur as full automorphism group of hyperelliptic curves
of genus $g$ and draws a the lattice of these groups.

There is plenty of literature on the automorphism groups of hyperelliptic curves. Among many papers we
mention \cite{BS},  \cite{GS}. Most of these papers have studied  the automorphism groups of the
hyperelliptic curve using the Fuchsian groups. Our goal is to provide a unified simple algebraic approach of
the results with the list of all the groups $G$ which occur as full automorphism groups of hyperelliptic
curves, all possible signatures $\s$ for each given group $G$, the dimension of each locus $\L$, and the
lattice of the loci $\L$.

\medskip

\noindent \textbf{Notation:} Throughout this paper $k$ denotes an algebraically closed field of
characteristic zero, $g$ an integer $\geq 2$, and $\X_g$ a hyperelliptic curve of genus $g$ defined over $k$.

%******************************************************************
\section{Hyperelliptic curves and their automorphisms}
%******************************************************************
Let $k$  be an algebraically  closed field of characteristic  zero and $\X_g$  be a genus  $g$ hyperelliptic
curve given  by the equation $y^2=F(x)$, where $\deg(F)=2g+2$. Denote  the function field of $\X_g$ by
$K:=k(x,y)$. Then, $k(x)$ is the  unique degree 2 genus zero subfield of  $K$. $K$ is  a quadratic extension
field of $k(x)$ ramified exactly at $d=2g+2$ places $\a_1, \dots , \a_d$  of $k(x)$. The corresponding places
of $K$ are called the {\it Weierstrass points} of $K$. Let $\P:=\{ \a_1, \dots , \a_d \}$ and $G=Aut(K/k)$.
Since $k(x)$  is the only  genus 0 subfield of degree  2  of $K$, then  $G$  fixes $k(x)$.  Thus,
$G_0:=Gal(K/k(x))=\< z_0 \>$, with $z_0^2=1$, is central in $G$. We call  \emph{the reduced automorphism
group} of $K$ the  group $\G:=G/G_0$. Then, $\G$ is isomorphic to one of the following: $ \Z_n$, $D_n$,
$A_4$, $S_4$, $A_5$ with branching indices of the corresponding cover $\bP^1_x \to \bP^1/ \G$ given
respectively by $(n,n),\  (2, 2, n),\ (2, 3, 3),\ (2, 4, 4),\ (2, 3, 5).$

We fix a coordinate $z$ in $\bP^1/ \G$. Thus, $\G$ is the monodromy group of a cover $\ff: \bP^1_x \to
\bP^1_z$. We denote by $q_1, \dots , q_r $ the corresponding branch points of $\ff$. Let $S$ be the set of
branch points of $\f: \X_g \to \bP^1_z$. Clearly $q_1, \dots , q_r \in S$. As above $W$ denotes the images in
$\bP^1$ of Weierstrass points of $\X_g$ and $V:=\cup_{i=1}^r \ff^{-1} (q_i) $. For each $q_1, \dots , q_r$ we
have a corresponding permutation $\s_1, \dots , \s_r \in S_n$. The tuple $\bar \s:=(\s_1, \dots , \s_r)$ is
the signature of $\G$. Thus, $\G= \< \s_1, \dots , \s_r \>$ and $\s_1 \cdots \s_r =1$.

Since each of the above groups is embedded in $PGL_2 (\C)$ then we can have these generating systems $\s_1,
\dots , \s_r$ as matrices in $PGL_2 (\C)$. Below we display all the cases:
\begin{equation}\label{eq1}
\begin{split}
i) \quad \Z_n  & \iso \Big{\<}\begin{mat2}   \zeta_n & 0 \\ 0 & 1 \\ \end{mat2},
\begin{mat2}   \zeta_n^{n-1} & 0 \\ 0 & 1 \\ \end{mat2} \Big{\>} \\
ii) \quad D_n & \iso \Big{\<}
\begin{mat2} 0 & 1\\1 & 0 \\\end{mat2}, \begin{mat2} 0 & 1\\1 & 0 \\\end{mat2},
\begin{mat2}   \zeta_n & 0 \\ 0 & 1 \\ \end{mat2}
\Big{\>} \\
iii) \quad A_4 & \iso \Big{\<}
\begin{mat2}  -1 & 0 \\ 0 & 1 \\ \end{mat2},
%\begin{mat2} 0 & \pm 1 \\ 1 & 0 \\ \end{mat2},
%\begin{mat2}   i^{\nu} & i^{\nu} \\ 1 & -1 \\ \end{mat2}, \\
%& \quad \begin{mat2} i^{\nu} & -i^{\nu} \\ 1 & 1 \\ \end{mat2},
\begin{mat2} 1 & i  \\ 1 & -i  \\ \end{mat2}
%\begin{mat2} -1 & -i^{\nu} \\ 1 & -i^{\nu} \\ \end{mat2}:\nu=1,3
\Big{\>}  \\
iv) \quad S_4 & \iso \Big{\<}
\begin{mat2}   -1 & 0 \\ 0 & 1 \\ \end{mat2},
\begin{mat2}   0 & -1 \\ 1 & 0 \\ \end{mat2},
\begin{mat2} -1 & -1 \\ 1 &  1 \\ \end{mat2}
 \Big{\>}  \\
v) \quad A_5 & \iso \Big{\<}
\begin{mat2}   \w & 1 \\ 1 & -\w \\ \end{mat2},
\begin{mat2}   \w & \e^4 \\ 1 & -\e^4\w \\ \end{mat2}
\Big{\>} \\
\end{split}
\end{equation}
where $\omega=\frac{-1+\sqrt{5}}{2}$,  $\zeta_n$ is a primitive $n^{th}$ root of unity, $\e$ is a primitive
$5^{th}$ root of unity, and $i$ is a primitive $4^{th}$ root of unity.

%************************************************************************
\subsection{Fixed fields of the reduced automorphism groups}
The group $\G$ given above acts on $k(x)$ via the natural way. The fixed field is a genus 0 field, say
$k(z)$. Thus, $z$ is a degree $| \G |$ rational function in $x$, say $z=\phi(x)$. In this section we
determine $\phi(x)$ and its decompositions.
\begin{lemma}\label{lem1}
Let $H$ be a finite subgroup of $PGL_2(k)$. Let us identify each element of $H$ with the corresponding
Moebius transformation and let $s_i$ be the $i$-th elementary symmetric polynomial in the elements of $H$,
$i=1,\ldots,|H|$. Then any non-constant $s_i$ generates $k(z)$.
\end{lemma}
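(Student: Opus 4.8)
The plan is to think of $H$ as acting on $\mathbb P^1_x$ by Moebius transformations, hence on $k(x)$ by field automorphisms, with fixed field $k(x)^H = k(z)$ a rational function field of one variable (Lüroth). Write $N = |H|$. For a variable $T$, consider the polynomial
\begin{equation*}
P(T) = \prod_{h \in H} \bigl(T - h(x)\bigr) = T^N - s_1 T^{N-1} + s_2 T^{N-2} - \cdots + (-1)^N s_N,
\end{equation*}
where $h(x)$ denotes the image of $x$ under the Moebius transformation $h$, and $s_i = s_i(\{h(x) : h \in H\})$ is the $i$-th elementary symmetric polynomial in these $N$ rational functions of $x$. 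Every $s_i$ lies in $k(x)$ and is fixed by $H$ (since $H$ permutes the multiset $\{h(x)\}$ by left translation), so each $s_i \in k(x)^H = k(z)$. First I would observe that $P(T)$ is the minimal polynomial of $x$ over $k(z)$: it is monic of degree $N = [k(x):k(z)]$ with coefficients in $k(z)$ and has $x$ as a root.

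Now fix an index $i$ with $s_i$ non-constant. Since $s_i \in k(z)$ and $z$ generates $k(z)$ over $k$, we can write $s_i = r(z)$ for a non-constant rational function $r \in k(T)$. The goal is to show $k(s_i) = k(z)$, equivalently that $r$ has degree $1$, i.e. that $[k(z):k(s_i)] = 1$. Suppose not, so $m := [k(z):k(s_i)] = \deg r \geq 2$. Then $[k(x):k(s_i)] = [k(x):k(z)]\cdot[k(z):k(s_i)] = Nm$. On the other hand, $s_i$ is, up to sign, a coefficient of the degree-$N$ polynomial $P(T)$ whose roots are the $N$ functions $h(x)$; I would argue that $s_i$ is a polynomial expression of "weighted degree" at most... — more carefully, the cleanest route is via ramification and the map to $\mathbb P^1$. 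Regard $s_i : \mathbb P^1_x \to \mathbb P^1$ as a morphism; its degree is $[k(x):k(s_i)] = Nm$. But $s_i$ factors through the quotient map $\phi : \mathbb P^1_x \to \mathbb P^1_x/H = \mathbb P^1_z$, which has degree $N$, followed by $r : \mathbb P^1_z \to \mathbb P^1$ of degree $m$. The key point: the fiber of $s_i$ over a generic value $c$ consists of those $x$ with $s_i(\{h(x)\}) = c$, and this set is visibly a union of $H$-orbits, each of size $N$ (for generic $c$ the stabilizers are trivial), so the fiber size $Nm$ forces... nothing yet. So that is not quite the obstruction; the real content is showing $m=1$, which cannot be purely formal since it is false for $s_i$ replaced by, say, $s_i^2$.

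The honest version of the argument must use the specific structure of the elementary symmetric functions: I would examine $P(T) = \prod_h (T - h(x))$ and note that the Galois group $\mathrm{Gal}(k(x)/k(z)) \cong H$ permutes the roots $\{h(x)\}$ simply transitively (it is the regular representation). Thus for any root $h_0(x)$, the set of all roots is the $\mathrm{Gal}$-orbit $\{\tau(h_0(x)) : \tau \in \mathrm{Gal}\}$. Now if $s_i = r(z)$ with $\deg r = m \geq 2$, pick a second variable and look at the resolvent: $k(z)$ sits strictly between $k(s_i)$ and $k(x)$, and I claim $s_i$ being a symmetric function of a $\mathrm{Gal}$-\emph{transitive} set of conjugates forces $k(s_i)(x) = k(x)$ to have $[k(x):k(s_i)] = N$. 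Concretely: the conjugates of $x$ over $k(s_i)$ include all $h(x)$, $h \in H$ (they are roots of $P(T) \in k(s_i)[T]$ once we know $s_j \in k(s_i)$ for all $j$ — which is the sticking point). The cleanest fix, and the step I expect to be the main obstacle, is the following Galois-theoretic lemma: \emph{if $L/F$ is Galois with group $\Gamma$ acting regularly on a set $R \subset L$ of $|\Gamma|$ elements, and $s$ is any non-constant elementary symmetric function of $R$, then $F(s) = F$} — no, that over-claims; rather one must use that $F = k(z)$ is exactly the fixed field and that a generic point of $\mathbb P^1_z$ has trivial stabilizer, combined with the fact that $\phi = s_1$ (or the chosen $s_i$) already has degree $N$ because distinct $H$-orbits give distinct values of $s_i$ generically. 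I would prove this last claim by a dimension/separating-points argument: two points $x_1, x_2 \in \mathbb P^1_x$ in different $H$-orbits cannot satisfy $s_i(\{h(x_1)\}) = s_i(\{h(x_2)\})$ for all $i$ simultaneously (that would force the multisets to coincide, hence the orbits to coincide), and for a single non-constant $s_i$ the set of "bad" pairs where only that one coincidence holds is a proper subvariety — so generically $s_i$ separates $H$-orbits, giving $\deg s_i = N$ and therefore $k(s_i) = k(z)$. Assembling: $[k(x):k(s_i)] = N = [k(x):k(z)]$ and $k(s_i) \subseteq k(z)$ force $k(s_i) = k(z)$, which is the assertion. The main obstacle is making the "$s_i$ generically separates $H$-orbits" step rigorous without circularity; once that is in hand the degree count closes the argument immediately.
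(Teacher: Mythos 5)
Your setup coincides with the paper's first step: you form $P(T)=\prod_{h\in H}\bigl(T-h(x)\bigr)$, observe that its coefficients $\pm s_i$ lie in the fixed field $k(z)$, and identify $P$ as the minimal polynomial of $x$ over $k(z)$. The paper then simply invokes the classical fact (the key lemma in the standard proof of L\"uroth's theorem) that any non-constant coefficient of this minimal polynomial generates $k(z)$. You instead attempt to prove that fact, and the argument you sketch has a genuine gap --- one you partly flag yourself. The claim that ``$s_i$ generically separates $H$-orbits because the set of bad pairs is a proper subvariety'' proves nothing: if $s_i=r(z)$ with $m=\deg r\geq 2$, the bad pairs still form only a curve inside the surface $\mathbb{P}^1\times\mathbb{P}^1$, hence a proper subvariety, so properness of the bad locus is automatic and carries no information about $m$. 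Your own earlier remark works against you here: coincidence of a single $s_i$ at $x_1,x_2$ does not force the multisets $\{h(x_1)\}$ and $\{h(x_2)\}$ to coincide, so nothing in the proposal rules out $m\geq 2$, and the concluding degree count $[k(x):k(s_i)]=N$ is asserted rather than proved.

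The missing step is an elementary degree (height) bound. Write each $h\in H$ as $h(x)=(a_hx+b_h)/(c_hx+d_h)$ and clear denominators: $Q(x,T):=\prod_{h\in H}\bigl((c_hx+d_h)T-(a_hx+b_h)\bigr)$ lies in $k[x][T]$ and has degree at most $N=|H|$ in $x$, since each factor is linear in $x$. The coefficient of $T^{N-i}$ in $Q$ equals $\pm s_i\cdot\prod_h(c_hx+d_h)$, so $s_i$ is a quotient of two polynomials in $x$ each of degree at most $N$; writing $s_i=p(x)/q(x)$ in lowest terms gives $\max(\deg p,\deg q)\leq N$, i.e. $[k(x):k(s_i)]\leq N$ when $s_i$ is non-constant. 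On the other hand $s_i\in k(z)$ gives $k(s_i)\subseteq k(z)$, hence $[k(x):k(s_i)]\geq[k(x):k(z)]=N$. Equality forces $k(s_i)=k(z)$. This short count is exactly the ``well-known'' fact the paper cites, and it is what your proposal needs in place of the separation-of-orbits heuristic.
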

\begin{proof}
It is easy to check that the $s_i$ are the coefficients of the minimum polynomial of $x$ over $k(z)$. It is
well-known that any non-constant coefficient of this polynomial generates the field.
\end{proof}

\begin{lemma}
The fixed field for each of the groups $\G$ in cases i) - v) is generated respectively by the function
\begin{itemize}
\item[i)] $z= x^n $

\item [ii)] $z= x^n + \frac 1 {x^n}$

\item [iii)] $z=\frac {x^{12} -33x^8 -33 x^4+1} {x^2 (x^4-1)^2}$

\item [iv)] $z= \frac {(x^8 + 14 x^4 +1 )^3} {108 \left( x (x^4-1) \right)^4}$

\item [v)] $z= \frac {\left(-x^{20}+228 x^{15}-494x^{10}-228x^5-1 \right)^3}
{1728 \left(x(x^{10} + 11x^5 -1)\right)^5}$
\end{itemize}
\end{lemma}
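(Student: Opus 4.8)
The plan is to apply Lemma~\ref{lem1} in each of the five cases: realize the reduced automorphism group $\G$ as a concrete finite subgroup $H$ of $PGL_2(k)$ via the generating matrices in \eqref{eq1}, list the corresponding Moebius transformations, and compute a conveniently chosen elementary symmetric function $s_i$ in those transformations. By Lemma~\ref{lem1} any non-constant $s_i$ generates $k(z)$, so it suffices to exhibit one that equals (up to a Moebius transformation of $z$, which does not change the fixed field) the claimed rational function. For $\G \iso \Z_n$, the transformations are $x \mapsto \zeta_n^j x$, $j=0,\dots,n-1$, whose product is $\pm x^n$, giving (i) immediately. For $\G \iso D_n$, the group is generated by $x \mapsto \zeta_n x$ and $x \mapsto 1/x$; the $2n$ transformations are $\zeta_n^j x$ and $\zeta_n^j/x$, and a short symmetric-function computation (e.g. $\prod_j(\zeta_n^j x) + \prod_j(\zeta_n^j/x)$ up to sign, or more directly the appropriate $s_i$) yields $x^n + x^{-n}$, establishing (ii).

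For the three exceptional cases $A_4$, $S_4$, $A_5$ I would not expand $s_i$ by brute force; instead I would use the classical description of the branch data. Each of these groups acts on $\bP^1_x$ with three branch points whose ramification indices are $(2,3,3)$, $(2,4,4)$, $(2,3,5)$ respectively, and the orbits over those three points are the well-known sets of $\{$vertices, edge-midpoints, face-centers$\}$ of the associated Platonic solid. Concretely: for $A_4$, orbits of sizes $6,4,4$ given by the roots of $x(x^4-1)$ (well, the size-$6$ orbit is the roots of $x^{12}+\dots$ — I'll identify these precisely), $x^4+2\sqrt{-3}x^2+1$ and its conjugate; for $S_4$, orbits of sizes $6,8,12$ given by $x^5-x$, $x^8+14x^4+1$, $x^{12}-33x^8-33x^4+1$; for $A_5$, orbits of sizes $12,20,30$ given by $x(x^{10}+11x^5-1)$, $x^{20}-228x^{15}+494x^{10}+228x^5+1$, $x^{30}+\dots$. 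One then forms the unique (up to scalar and additive constant) degree $|\G|$ rational function in $x$ that is $\G$-invariant and has the prescribed zeros and poles on these orbits; matching multiplicities against the ramification indices forces the exponents $3$ and $2$ (resp. $3$ and $2$; $3$ and $2$) in the numerator and denominator, and a single normalization constant is pinned down by comparing two of the three special fibers (the branch points being at $0$, $1$, $\infty$ after scaling $z$). This reproduces the displayed formulas in (iii), (iv), (v).

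The main obstacle is purely computational: identifying the three exceptional orbits explicitly as the root sets of the polynomials appearing in (iii)–(v), and verifying the numerical constants $108$ and $1728$ (and the signs and coefficients $33, 14, 228, 494, 11$). These can be checked either by a direct invariance computation — applying the generators in \eqref{eq1} to a trial rational function and solving for the coefficients — or, more cleanly, by the relation among invariants: if $P_2, P_3, P_5$ denote the invariant forms of degrees equal to the orbit sizes (the "Klein forms" for each group), there is a single syzygy of the shape $c_1 P_a^{\,e_a} + c_2 P_b^{\,e_b} = P_c^{\,e_c}$ (for appropriate permutation of indices), and dividing through by one term exhibits $z$. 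Since Lemma~\ref{lem1} already guarantees that \emph{some} non-constant symmetric function generates $k(z)$, and any two degree-$|\G|$ generators differ by a Moebius transformation, the only real content is checking that the specific normalization written in the statement is itself $\G$-invariant of the correct degree — equivalently, that $\phi(x) - \phi(\g x) = 0$ for $\g$ running over the generators. I would verify this for the two generators in each case, which is a finite check, and remark that the constants are then determined by evaluating at the ramification points.
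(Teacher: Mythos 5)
Your proposal is correct, and for cases (i) and (ii) it is exactly what the paper does: the paper's entire proof is ``apply Lemma~\ref{lem1} to the embedding of $\G$ given in Eq.~\eqref{eq1}'', i.e.\ compute an elementary symmetric function of the listed Moebius transformations, which is precisely your computation of $\prod_j \zeta_n^j x = \pm x^n$ for $\Z_n$ and of the middle coefficient of $(T^n-x^n)(T^n-x^{-n})$ for $D_n$. Where you diverge is in the three exceptional cases: the paper implicitly asks you to grind out the symmetric functions of the $12$, $24$, or $60$ Moebius transformations (a computer-algebra task whose success is guaranteed by Lemma~\ref{lem1}), whereas you replace this by the classical Klein picture --- identify the three exceptional orbits with the vertex/edge/face orbits of the Platonic solid, write down the unique $\G$-invariant degree-$|\G|$ function with the prescribed zero and pole divisors, and pin down the constants; or, as your cleanest fallback, simply verify that the displayed $\phi$ satisfies $\phi(\gamma x)=\phi(x)$ for the two generators and has degree $|\G|$, which forces $k(\phi)=k(x)^{\G}$ by the degree count $[k(x):k(\phi)]=|\G|=[k(x):k(x)^{\G}]$. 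Both routes are complete; yours has the advantage of actually certifying the \emph{specific} normalizations ($108$, $1728$, the coefficients $33,14,228,\dots$) appearing in the statement, which the paper's one-line proof leaves entirely to the reader, at the cost of importing (or re-deriving) the classical orbit polynomials. The only blemish is your momentary hesitation over the $A_4$ orbits: the size-$6$ orbit is the roots of $x(x^4-1)$ together with $\infty$ (the order-$2$ points, matching the squared denominator), while the two size-$4$ orbits are the roots of $x^4\pm 2i\sqrt{3}\,x^2+1$ (the order-$3$ points, appearing cubed in the fibers over $\pm 6i\sqrt{3}$); note that for this case the branch points are $\{\infty,\pm 6i\sqrt 3\}$ rather than $\{0,1,\infty\}$, so the numerator of (iii) is not itself a cube. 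This is a detail to fix, not a gap in the method.
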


\begin{proof}
Apply  Lemma~\ref{lem1} to the embedding of $\G$ given above.
\end{proof}

Notice that the branch points of a rational function $\phi (x)= \frac {f(x)} {g(x)}$ are exactly the zeroes
of the discriminant of the polynomial $r(x):=f(x)-t\cdot g(x)$ with respect to $x$. Then the branch points of
each of the above functions are
i) $\{0, \infty\}$,
ii) $\{ -2, 2, \infty\}$,
iii) $\{\infty, -6i\sqrt{3}, 6i\sqrt{3}\}$,
iv) $\{0, 1, \infty\}$,
v)  $\{ 0, 1728, \infty\}$.
The above facts are well known in the literature, see for example Klein \cite{Kl}.

\subsection{Decomposition of $\phi(x)$}
%&&&&&&&&&&&&&&&&&&&&&&&&&&&&&&&&&&&&&&&&&&&&&&&&&&&&&&&&&&&&
The automorphism group of $k(x)/k(\phi)$ is the embedding of $\G$ detailed before. As $|\G|=[k(x):k(\phi)]$,
there is a degree-preserving correspondence between subgroups of $\G$ and intermediate fields in the
extension. By L\"uroth's Theorem, each of those fields is $k(h)$ for some rational function $h$. Now, it is
clear that, in general, $k(f)\subset k(h) \Leftrightarrow f=g\circ h\mathrm{\ for\ some\ } g$. Thus, we can
use computer algebra techniques to find all the decompositions of $\phi$ and describe the lattice of
intermediate fields.

It is clear from the expression of $\phi$ that there is a decomposition $\phi=g(x^s)$ for $s$ taking the
values $n, n, 2,4,5$ respectively.  This comes also from the fact that the subgroup $\< \e \cdot x\>$ of $\G$
corresponds to the field generated by $x\cdot \e x\cdot \cdots \e^{s-1} x=x^s.$

Finding different decompositions of $\phi (x)$ is not a trivial computational problem. There are algorithms
available to do this; see \cite{G} for details.

For example, for $\G = A_5$ it is also possible to find decompositions involving $x^2$ or $x^3$ for functions
that are equivalent to $\phi$. Namely, for any $\sigma\in PGL_2(k)$, a generator of the field fixed for the
conjugate group $\sigma A_5\sigma^{-1}$ is $\phi(\sigma^{-1})$. If $\sigma$ is chosen in such a way as having
$\{x,-x\}<\sigma A_5 \sigma^{-1}$, then $k(x\cdot(-x))=k(x^2)$ will be an intermediate field by Lemma 1. This
can be accomplished by conjugating any involution of $A_5$ into $-x$. In the same manner, if an element of
order 3 in $A_5$ is conjugated into $\zeta_3 x$, where $\zeta_3$ is a primitive cubic root of $1$, the
resulting function can be written in terms of $x\cdot\zeta_3 x\cdot\zeta_3^2 x=x^3$; see \cite{a5} for
details.

%*******************************************************************
\subsection{Automorphism groups and their signatures}
%*********************************************************************
The automorphism groups of hyperelliptic curves have been classified by \cite{BS}, \cite{Bu}. Most of these
results study automorphism groups in terms of the Fuchsian groups. Since, we take the algebraic approach we
go over some of the results briefly.

The automorphism group $G$ of the hyperelliptic curve is a degree 2 central extension of $\G$.  The following
lemma is proved in \cite{GS}.
\begin{lemma}
Let $p\geq 2$, $\a \in G$ and $\bar \a$ its image in $\G$ with order $| \, {\bar \a} \, |= p $. Then,

i) $| \, \a \, | = p$ if and only if it fixes no Weierstrass points.

ii) $| \, \a \, | = 2p$ if and only if it fixes some Weierstrass point.
\end{lemma}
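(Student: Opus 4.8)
The plan is to work with the central extension $1 \to \<w\> \to G \to \G \to 1$ directly, using the fact that $w$ is the hyperelliptic involution, which acts on $\X_g$ by fixing exactly the $2g+2$ Weierstrass points and which, on the function field side, generates $\mathrm{Gal}(K/k(x))$. Fix $\a \in G$ whose image $\bar\a \in \G \subset PGL_2(k)$ has order $p \geq 2$. Since the extension is central of degree $2$, the subgroup $\< \a, w\>$ is abelian, and the order of $\a$ is either $p$ or $2p$ according to whether $\a^p = 1$ or $\a^p = w$. So the entire content of the lemma is to match these two algebraic alternatives with the geometric dichotomy ``$\a$ fixes a Weierstrass point'' versus ``$\a$ fixes no Weierstrass point.'' This will be done by a local computation at a fixed point together with the Riemann–Hurwitz formula.

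First I would set up coordinates. Since $\bar\a$ has order $p$ in $PGL_2(k)$, after conjugation we may take $\bar\a: x \mapsto \zeta_p x$ with $\zeta_p$ a primitive $p$-th root of unity, so that $\bar\a$ has exactly two fixed points on $\bP^1_x$, namely $x=0$ and $x=\infty$. A point $P$ of $\X_g$ above $x=0$ (resp.\ $x=\infty$) is fixed by $\a$ iff $\bar\a$ fixes the corresponding point of $\bP^1_x$ — which it does — and this case splits further according to whether $x=0$ is a branch point of $y^2 = F(x)$, i.e.\ whether it is (the image of) a Weierstrass point. I would then examine the action of $\a$ on a uniformizer at such a point $P$. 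If $x=0$ is not a Weierstrass point, then $x$ itself is (up to a unit) a uniformizer at the two points above it, $\a$ permutes these two points, and the quotient $\X_g/\<\a\>$ behaves just like $\bP^1_x/\<\bar\a\>$ near that branch value; the key point is that $\a$ then has the same order $p$ as $\bar\a$, because the ``doubling'' from $w$ is already accounted for by the swap of the two points over $x=0$. If $x=0$ \emph{is} a Weierstrass point, then there is a single point $P$ of $\X_g$ over it, $y$ is a uniformizer at $P$, and $\a$ acts on $y$ by a root of unity; tracking the order one finds $\a$ must have order $2p$, since fixing $P$ forces $\a^p$ to act trivially on $x$ but as a nontrivial deck transformation, hence $\a^p = w$.

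The cleanest way to make the above rigorous, and the way I would actually carry it out, is via Riemann–Hurwitz applied to the degree-$p$ or degree-$2p$ quotient map. Let $\langle \a \rangle$ act on $\X_g$ with $\X_g \to \X_g/\langle\a\rangle =: Y$; the genus of $Y$ and the ramification are constrained by $2g-2 = |\a|\,(2g_Y - 2) + \sum (|\a| - |\a|/e_i)$ over the branch points. Comparing this with the known ramification of $\bar\a$ on $\bP^1_x$ (two fixed points, contributing the $(p,p)$ part of the signature when $\G = \Z_p$, and similarly in the other cases), and using that the only extra ramification of $\X_g \to \bP^1_x$ sits over Weierstrass points, pins down $|\a|$ in each of the two scenarios. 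The main obstacle — really the only subtle point — is the bookkeeping at a point $P$ of $\X_g$ that lies over a fixed point of $\bar\a$ on $\bP^1_x$ \emph{and} over a Weierstrass point: here one must carefully distinguish the stabilizer of $P$ in $G$ from the stabilizer of its image in $\bP^1_x$, and confirm that the hyperelliptic involution $w$ lies in the former exactly when $P$ is a Weierstrass point. Once that local picture is correct, both implications (and their converses, which are immediate from $|\a| \in \{p, 2p\}$) follow. I would also remark that cases (i)–(v) of \eqref{eq1} are not needed for the proof itself — the argument is uniform in $\G$ — but they provide the concrete models in which the reader can check the claim by hand.
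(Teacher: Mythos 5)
The paper itself offers no proof of this lemma --- it is quoted from \cite{GS} --- so there is no argument of the paper's to compare yours against; the only question is whether your sketch stands on its own, and at present it does not. Your opening reduction is correct and is the right starting point: since $w$ is central of order $2$, $\a^p\in\{1,w\}$ and $|\a|\in\{p,2p\}$, so everything comes down to deciding when $\a^p=w$. But both halves of your case analysis then end in an assertion rather than a derivation. In the non-Weierstrass case, ``the doubling from $w$ is already accounted for by the swap of the two points over $x=0$'' is not an argument: $\a$ need not swap the two points of $\X_g$ over $x=0$ (it may fix both), and when it does swap them the parity of $p$ enters, because then $w\a$ fixes them and $\a^p=w^p(w\a)^p=w^p$. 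In the Weierstrass case, ``fixing $P$ forces $\a^p$ to act trivially on $x$ but as a nontrivial deck transformation, hence $\a^p=w$'' assumes exactly what is to be proved, namely that $\a^p\neq 1$. The Riemann--Hurwitz paragraph does not close these gaps: to apply Riemann--Hurwitz to $\X_g\to\X_g/\langle\a\rangle$ you need the ramification indices at the points over $0$ and $\infty$, and those are determined by $|\a|$, the very quantity you are trying to compute, so the bookkeeping is circular.

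The gap is not merely one of rigor, because the statement is genuinely delicate when $p$ is odd: the two lifts $\a$ and $w\a$ of $\bar\a$ then have orders $p$ and $2p$ in some order (one is $w$ times the other and $w^p=w$), so for at least one lift one of the two ``if and only if''s fails whatever the Weierstrass configuration. Concretely, $y^2=(x^3-1)(x^3-2)(x^3-3)$ has genus $4$ and the automorphism $(x,y)\mapsto(\zeta_3x,\,y)$ has order $3$ yet fixes the Weierstrass point over $x=\infty$. So no amount of local hand-waving can establish the literal statement; a correct proof must either assume $p$ even (the case the paper actually needs when lifting involutions) or prove the existence version (``$\bar\a$ admits a lift of order $p$, resp.\ $2p$, iff it fixes no, resp.\ some, Weierstrass point''), which is how the lemma is used later. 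The short honest argument is the explicit one: normalize $\bar\a$ to $x\mapsto\zeta_px$, write the curve as $y^2=F(x)$, note that invariance of the branch locus gives $F(\zeta_px)=\zeta_p^{\deg F}F(x)$ with $\deg F\equiv\epsilon\pmod p$, where $\epsilon=1$ or $0$ according as $x=0$ is or is not a branch point; then $\a(y)=cy$ with $c^2=\zeta_p^{\epsilon}$, and $\a^p=w$ exactly when $c^p=-1$. For $p$ even this settles both directions at once (both lifts have the same order, equal to $p$ when $\epsilon=0$ and $2p$ when $\epsilon=1$), and it exposes the odd-$p$ failure. Alternatively, the stabilizer formulation makes your ``local picture'' precise: $\mathrm{Stab}_G(P)$ is cyclic and contains $w$ iff $P$ is a Weierstrass point, so over a non-Weierstrass fixed point it embeds in $\G$, forcing order $p$, while over a Weierstrass fixed point it contains the cyclic group $\langle\a,w\rangle$ of order $2p$, forcing order $2p$ when $p$ is even.
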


Let $W$ denote the images in $\bP^1_x$ of Weierstrass places of $\X_g$ and $V:=\cup_{i=1}^3 \phi^{-1} (q_i)$.

Let $z= \frac {\nf (x) } { \df (x) }$, where $\nf, \df \in k[x]$. For each branch point $q_i$, $i=1,2,3$ we
have the degree $|\G|$ equation $z\cdot \df(x) - q_i \cdot \df (x) =\nf (x),$ where the multiplicity of the
roots correspond to the ramification index for each $q_i$ (i.e., the index of the normalizer in $\G$ of
$\s_i$). We denote the ramification of $\phi: \bP^1_x \to \bP^1_z$, by $\varphi_m^r, \chi_n^s,  \psi_p^t$,
where the subscript denotes the degree of the polynomial.

Let $\l \in S \setminus \{ q_1, q_2, q_3\}$. The points in the fiber of a non-branch point $\l$ are the roots
of the equation: $ \nf (x) -\l \cdot \df (x) =0.$ To determine  the equation of the curve we simply need to
determine the Weierstrass points of the curve. For each fixed $\phi$ there are the following eight cases:

\begin{equation}\label{cases}
\begin{split}
1) & \quad  \, V \cap W = \emptyset,\\
2) & \quad  \, V \cap W =\phi^{-1} (q_1),\\
3) & \quad  \, V \cap W =\phi^{-1} (q_2),\\
4) & \quad  \, V \cap W =\phi^{-1} (q_3),\\
5) & \quad  \, V \cap W =\phi^{-1} (q_1) \cup \phi^{-1} (q_2),\\
6) & \quad  \,V \cap W =\phi^{-1} (q_2) \cup \phi^{-1} (q_3),\\
7) & \quad  \, V \cap W =\phi^{-1} (q_1) \cup \phi^{-1} (q_3),\\
8) & \quad  \, V \cap W =\phi^{-1} (q_1) \cup \phi^{-1} (q_2) \cup \phi^{-1} (q_3).
\end{split}
\end{equation}

\noindent It turns out that the above cases also determine the   full automorphism groups. We define the
following groups  as follows:
\begin{equation}\label{groups1}
\begin{split}
V_n := & \<  \, \, x, y \, | \, x^4, y^n,  (xy)^2, (x^{-1}y)^2\, \>, \quad
H_n :=  \<\,  x, y \, \, | \, \, x^4, y^2x^2, (xy)^n \, \>,   \\
G_n := & \< \, x, y \, \, | \, \, x^2 y^n, y^{2n},  x^{-1} y x y \, \>,\quad
U_n :=  \< \, x, y \, | \, x^2, y^n,  xyxy^{n+1} \>,  \\
\end{split}
\end{equation}
Sometimes these groups are called \textbf{twisted dihedral, double dihedral, generalized quaternion}, and
\textbf{semidihedral}. We warn the reader that these terms are not standard in the literature. They are all
four degree 2 central extensions of the dihedral group $D_n$ and therefore have order $4n$. Notice that $V_2$
is isomorphic with the dihedral group of order 8 and $H_2 \iso U_2 \iso \Z_2 \o \Z_4$. Furthermore, we have
the following result, the proof is elementary and we skip the details.

\begin{remark}  i) If $n\equiv 1 \mod 2$ then $H_{4n} \iso G_{4n}$

 ii) If $n=2^{s+1}$ then $G_n = Q_{2^{s+1}}$ for any $s\in \Z$.
\end{remark}

\noindent Further, the following groups
\begin{equation*}
\begin{split}
W_2:=  & \< \, x, y \, | \, x^4, y^3, y x^2  y^{-1} x^2, (x y)^4 \>, \quad
W_3:=   \< \, x, y \, | \, x^2, y^3, x^2 (x y)^4, (x y)^8 \> \\
\end{split}
\end{equation*}
are degree 2 central extensions of $S_4$. Now we have the following result.
\begin{thm}
The full automorphism group of a hyperelliptic curve is isomorphic to one of the following
$\Z_2 \o \Z_n$,  $\Z_n$,
$\Z_2 \o D_n$, $V_n$, $D_n$, $H_n$, $G_n$, $U_n$,
$\Z_2 \o A_4$, $SL_2(3)$, $\Z_2 \o S_4$,  $GL_2 (3)$, $W_2$, $W_3$
$\Z_2 \o A_5$, $SL_2 (5)$.
Furthermore, the signature for each group is as in Table~\ref{tab_1}.
\end{thm}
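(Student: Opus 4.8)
The plan is to combine the two structural facts already in hand. By Section~2 the full automorphism group $G$ sits in a central extension $1\to\langle w\rangle\to G\to\G\to 1$, with $\langle w\rangle=G_0\iso\Z_2$ the hyperelliptic involution and $\G$ one of $\Z_n$, $D_n$, $A_4$, $S_4$, $A_5$; and the doubling lemma above ties the order of an element of $G$ to whether it fixes Weierstrass points. Fix the canonical generators $\s_1,\dots,\s_r$ of $\G$ coming from the monodromy of $\ff\colon\bP^1_x\to\bP^1_z$, with $\s_1\cdots\s_r=1$ and $\s_i$ of order the branching index $m_i$ recorded in Section~2, and pick a lift $\widetilde\s_i\in G$ of each. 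Since $\ff$ is Galois, every fibre $\ff^{-1}(q_i)$ is a single $\G$-orbit, and since $W\subset\bP^1_x$ is $\G$-stable, each $\ff^{-1}(q_i)$ either lies in $W$ or is disjoint from $W$; this is precisely the $2^r$-fold dichotomy of the cases \eqref{cases}. A generator of a point-stabilizer lying over $q_i$ is conjugate to a power of $\s_i$, so the doubling lemma reads off the order of (the appropriate lift of) $\s_i$ from the configuration \eqref{cases}, and these orders, together with the value $\varepsilon\in\{0,1\}$ in the relation $\widetilde\s_1\cdots\widetilde\s_r=w^{\varepsilon}$, pin down $G$ up to isomorphism --- as the quotient of the abstract group on $w,\widetilde\s_1,\dots,\widetilde\s_r$ with $w$ central of order $2$, the $\widetilde\s_i$ of those orders, and that product relation.

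Running through the cases yields the list. For $\G=\Z_n$ one gets $\Z_2\o\Z_n$ or $\Z_{2n}$, the latter appearing in the statement as $\Z_n$ with even index. For $\G=D_n$ one gets $\Z_2\o D_n$, the dihedral group $D_{2n}$ (appearing as $D_n$), and $V_n$, $H_n$, $G_n$, $U_n$ of \eqref{groups1}; there are eight cases but six names because of the coincidences noted in the Remark above. For $\G=A_4$, $S_4$, $A_5$ one gets the split extension $\Z_2\o\G$ together with, respectively, $SL_2(3)$; $GL_2(3)$, $W_2$, $W_3$; and $SL_2(5)$ --- and for these three platonic groups this exhausts the central extensions of $\G$ by $\Z_2$, so the list cannot be enlarged. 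The routine part is matching each lifted presentation with the corresponding named group (and with $SL_2(3)$, $GL_2(3)$, $SL_2(5)$); the delicate part is distinguishing the three non-split double covers $GL_2(3)$, $W_2$, $W_3$ of $S_4$, all of order $48$, which one does by comparing the orders of the lifts of a transposition, a $3$-cycle and a $4$-cycle.

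Conversely, every listed group is realized, with the stated signature, by an explicit curve. Given $\G$ and a choice of which of the $\ff^{-1}(q_i)$ are to consist of Weierstrass points, one builds $F(x)$ as a product of the explicit polynomials cutting out those fibres --- read off from $z=\nf(x)/\df(x)$ as in Section~2 --- times a product of $\delta$ generic $\G$-orbit polynomials, and checks via the doubling lemma that $y^2=F(x)$ has the prescribed group and that, for generic coefficients, this is the \emph{full} automorphism group --- the last point being that the group is not contained in a strictly larger one with the same $\G$, which is the content of the lattice and dimension computations of Sections~3--5. The identity $\#W=2g+2$ becomes $|\G|\bigl(\delta+\sum_i\varepsilon_i/m_i\bigr)=2g+2$, which both fixes the genus and, through the parity it imposes, rules out the inconsistent combinations of case and $\varepsilon$ (for instance, for $\G=\Z_n$ with $n$ even it forces $0$ and $\infty$ to be Weierstrass points simultaneously or not at all).

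Finally, the signature in each case comes from the Riemann--Hurwitz formula applied to $\f\colon\X_g\to\bP^1_z=\X_g/G$: the degree is $2|\G|$, the ramification over each $q_i$ is read off from $m_i$ and the case at hand, and over every other branch point it is simple of index $2$ (a free Weierstrass orbit); collating this data and comparing with $\dim\L$ produces Table~\ref{tab_1}. I expect the main obstacle to be the combinatorial bookkeeping in the dihedral case together with the correct identification of $W_2$ and $W_3$ among the double covers of $S_4$; once those are settled, the remaining cases are a systematic but routine enumeration.
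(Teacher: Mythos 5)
Your proposal follows essentially the same route as the paper: the central extension $1\to\langle w\rangle\to G\to\G\to 1$ with $\G\subset PGL_2(k)$ one of $\Z_n, D_n, A_4, S_4, A_5$, the doubling lemma deciding whether each $\s_i$ lifts to order $|\s_i|$ or $2|\s_i|$ according to which fibres $\ff^{-1}(q_i)$ consist of Weierstrass points, a case-by-case identification of the resulting presentations, and Riemann--Hurwitz for the signatures. The extra points you supply (the extension class $\varepsilon$, the exhaustion of double covers of the platonic groups, and the realization/counting argument via $\#W=2g+2$) only make explicit what the paper leaves implicit, so this is the same proof.
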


\begin{proof}
The ramification of $\phi : \bP^1_x \to \bP^1/ \G$   is one of the following $(n,n)$, $(2, 2, n)$, $(2, 3,
3)$,  $(2, 4, 4)$, $(2, 3, 5)$.  Recall that $|G|=2 | \G |$. Each case of Eq.~\eqref{cases} determines a
group of automorphisms.

Let $\G=\Z_n$ and $\G=< \a>$. If $\a$ fixes no Weierstrass points then, from the above Lemma, $\a$ lifts to
an element of order $n$ in $G$. Hence, $G=\Z_2 \o \Z_n$. If $\a$ fixes some Weierstrass points then $\a$
lifts to an element of order $2n$ in $G$ and $|G|=2n$ then $G$ is the cyclic group of order $2n$.

The cases left have three branch points for the cover  $\phi : \bP^1_x \to \bP^1/ \G$. From the above lemma
we have that if the places in the fiber $\phi^{-1} (q_1)$, $\phi^{-1} (q_2)$, $\phi^{-1} (q_3)$, are
Weierstrass points then $\s_1, \s_2, \s_3$ lift in $G$ to elements of orders $2 |\s_1|$, $2 |\s_2|$, and $2
|\s_3|$ respectively.

Let $\G\iso D_n$ where $D_n$ is given as in Eq.~\eqref{eq1}. Since the branching of $q_1$ and $q_2$ is the
same then there are basically six distinct cases which could arise. In other words, cases 2 and 3 from
Eq.~\eqref{cases} give the same group $G$. The same happens for cases 6 and 7 from Eq.~\eqref{cases}.

If none of the places in the fibers $\phi^{-1} (q_1)$, $\phi^{-1} (q_2)$, $\phi^{-1} (q_3)$, are Weierstrass
points then $\s_1, \s_2, \s_3$ lift in $G$ to elements of orders $|\s_1|$, $ |\s_2|$, and $ |\s_3|$
respectively. Together, with the hyperelliptic involution we have $G= \Z_2 \o D_n$. When places in $\phi^{-1}
(q_1)$ (or $\phi^{-1} (q_2)$) are Weierstrass points then the involution $\s_1 \in D_n$, as in
Eq~\eqref{eq1},  lifts to an element of order 4. In this case, the group has generators
\[ G= \<  \, \, \bar \s_1 , \bar \s_3 \, | \, {\bar \s_1}^4, {\bar \s_3}^n,  (\bar \s_1 \bar \s_3)^2,
({\bar \s_1}^{(-1)}{\bar \s_3})^2\, \>,\]
where $\bar \s_1, \bar \s_3$ are the lifts of $\s_1, \s_3$ in $G$. Thus, $G\iso V_n$.

When places in $\phi^{-1} (q_3)$  are Weierstrass points then the element of order $\s_1 \in D_n$ of order
$n$ lifts to an element of order $2n$. In this case, the group $G$ has generators of order $2n$ and the
hyperelliptic involution. Thus, $G \iso D_{2n}$.

The other three cases from Eq.~\eqref{cases}, namely case 5), 6) or 7), and case 8) give groups $H_n$, $U_n$,
$G_n$ with presentation as in Eq.~\eqref{groups1}.

If $\G \iso A_4$ then again we have two branch cycles which are the same. Hence, we have 6 cases. When the
involution lifts to an element of order 4 then the degree 2 central extension of $A_4$ is $SL_2 (3)$,
otherwise the extension is $\Z_2 \o A_4$.

The proof when $\G \iso S_4$ or $A_5$ goes similarly and we skip the details. The signature for each group
follows accordingly for each case.

\end{proof}

The above theorem was first proven in \cite{Bu} using Fuchsian groups. Notice that representations of groups
are given in that paper.

\begin{tiny}
\begin{table}
\begin{center}
\renewcommand{\arraystretch}{1.24}
\begin{tabular}{||c|c|c|c|c|c|c ||}
 \hline \hline \# & $G$ & $\G$ & $\d (G, \bC)$ & $\d, \, n, \,g  $ & $\bC=(C_1, \dots C_r)$ & $ \ff$ \\
 \hline \hline
 1 & $\Z_2 \o \Z_n $ & &$\frac {2g+2} n -1$&$n < g+1$ & $(n^2, n^2, 2^n, \dots , 2^n)$ &  \\
 2 &  $\Z_{2n}$       &$\Z_n$  &$\frac {2g+1}n -1$  &  & $(n^2, 2n, 2^n, \dots , 2^n )$& $(n, n)$ \\
 3 &   $\Z_{2n}$       & &$\frac {2g} n -1 $ &$n < g$ & $(2n, 2n, 2^n, \dots , 2^n)$  &   \\
   \hline \hline
4 & $\Z_2 \o D_n$  &     &$\frac {g+1} n$         &     & $( n^4, 2^{2n}, \dots , 2^{2n} )$ &  \\
5 & $V_n$          &     &$\frac {g+1}n-\frac 1 2$&   & $(n^4, 4^n, 2^{2n}, \dots , 2^{2n} )$&   \\
6 & $D_{2n}$       &$D_n$&$\frac g n$             &                 & $((2n)^2, 2^{2n}, \dots , 2^{2n} )$& $(2^n, 2^n, n^2)$    \\
7 & $H_n$          &     &$\frac {g+1} n -1$      &$ n < g+1$  &$(4^n, 4^{n}, n^4, 2^{2n} \dots , 2^{2n} )$ &    \\
8 & $U_n$          &     &$\frac g n- \frac 1 2$  &$ g \neq 2$  &$(4^n, (2n)^2, 2^{2n}, \dots , 2^{2n})$ &     \\
9 & $G_n$          &     &$\frac g n - 1$         &$n < g$   &$(4^n, 4^n, (2n)^2, 2^{2n}, \dots , 2^{2n})$    &       \\
\hline \hline
10 & $\Z_2\o A_4$ &  &$\frac {g+1} 6$&  &$( 3^8, 3^8, 2^{12}, \dots , 2^{12} )$ &    \\
11 & $\Z_2\o A_4$ & &$\frac {g-1} 6$&  & $( 3^8, 6^4, 2^{12}, \dots , 2^{12} )$&   \\
12 & $\Z_2\o A_4$&$A_4$&$\frac {g-3} 6$&$\d\neq 0$ & $( 6^4, 6^4, 2^{12}, \dots , 2^{12})$&$(2^6, 3^4, 3^4)$   \\
%   & $A_4$ &  & &$(2^6, 3^4, 3^4)$ &  \\
13 & $SL_2(3) $ &  & $\frac {g-2} 6$ &$\d\neq 0$ &$(4^6, 3^8, 3^8, 2^{12}, \dots , 2^{12})$ &    \\
14 & $SL_2(3) $ & & $\frac {g-4} 6$ & &$(4^6, 3^8, 6^4, 2^{12}, \dots , 2^{12})$ &   \\
15 & $SL_2(3) $ &  & $\frac {g-6} 6$ &$\d\neq 0$ &$(4^6, 6^4, 6^4, 2^{12}, \dots , 2^{12})$ &   \\
\hline \hline
16 & $\Z_2\o S_4$ &  &$\frac {g+1} {12}$& &$( 3^{16}, 4^{12}, 2^{24}, \dots , 2^{24} )$&   \\
17 & $\Z_2\o S_4$ &  &$\frac {g-3} {12}$& &$( 6^{8}, 4^{12}, 2^{24}, \dots , 2^{24} )$&  \\
18 & $GL_2(3)$ &  &$\frac {g-2} {12}$&    &$( 3^{16}, 8^{6}, 2^{24}, \dots , 2^{24} )$&  \\
19 & $GL_2(3)$&$S_4$&$\frac {g-6}{12}$&       &$( 6^{8}, 8^{6}, 2^{24}, \dots , 2^{24} )$&$(2^{12}, 3^8, 4^6)$  \\
20 & $W_2$ &  &$\frac {g-5} {12}$& &$( 4^{12}, 4^{12}, 3^{16},  2^{24}, \dots , 2^{24} )$&  \\
21 & $W_2$ & &$\frac {g-9} {12}$& &$( 4^{12}, 4^{12}, 6^{8}, 2^{24}, \dots , 2^{24} )$&   \\
22 & $W_3$ & &$\frac {g-8} {12}$ & &$( 4^{12}, 3^{16}, 8^{6}, 2^{24}, \dots , 2^{24} )$&   \\
23 & $W_3$ & &$\frac {g-12} {12}$& &$( 4^{12}, 6^{8}, 8^{6}, 2^{24}, \dots , 2^{24} )$&   \\
\hline \hline
24 & $\Z_2\o A_5$ & & $\frac {g+1} {30}$& &$(3^{40}, 5^{24}, 2^{60}, \dots , 2^{60})$&     \\
25 & $\Z_2\o A_5$ & & $\frac {g-5} {30}$& &$(3^{40}, 10^{12}, 2^{60}, \dots , 2^{60})$&       \\
26 & $\Z_2\o A_5$ & & $\frac {g-15} {30}$& &$(6^{20}, 10^{12}, 2^{60}, \dots , 2^{60})$&    \\
27 & $\Z_2\o A_5$ & $A_5$ & $\frac {g-9} {30}$& &$(6^{20}, 5^{24}, 2^{60}, \dots , 2^{60})$& $( 2^{30}, 3^{20}, 5^{12} )$ \\
28 & $SL_2(5)$& &$\frac {g-14} {30}$& &$(4^{30}, 3^{40}, 5^{24}, 2^{60}, \dots , 2^{60})$&    \\
29 & $SL_2(5)$&  &$\frac {g-20} {30}$& &$(4^{30}, 3^{40}, 10^{12}, 2^{60}, \dots , 2^{60})$&     \\
30 & $SL_2(5)$&  &$\frac {g-24} {30}$ & &$(4^{30}, 6^{20}, 5^{24}, 2^{60}, \dots , 2^{60})$&    \\
31 & $SL_2(5)$&  &$\frac {g-30} {30}$ & &$(4^{30}, 6^{20}, 10^{12}, 2^{60}, \dots , 2^{60})$&    \\
\hline \hline
\end{tabular}
\end{center}
\vspace{3mm} \caption{$\Aut (\X_g)$ and the corresponding signatures}\label{tab_1}
\renewcommand{\arraystretch}{1.24}
\end{table}
\end{tiny}

\begin{remark}
i) In cases 4, 5, and 7-9 we have $n \equiv 0 \mod 2$.
\end{remark}

%\begin{cor} Let $\X_g$ be a genus $g\geq 2$ hyperelliptic curve. Then $$|\Aut (\X_g) |    \leq    8 (g+1),$$
%and this bound is obtained for all $g\geq 2$. \end{cor}

%\begin{proof}  For $g\leq 5$ we can easily check by inspection.
%For $g > 5$ is enough to check only the cases when $\bAut (\X_g) \iso \Z_n, D_n$. Since $\d$ in
%Table~\ref{tab_1}, is an integer then this determines the maximum order of the group. Determine cases which
%this occurs.\end{proof}

%Notice that the bound is obtained for all $g\geq 2$ as case 5, of Table~\ref{tab_1}. A quick inspection of
%Table~\ref{tab_1} shows that the locus of hyperelliptic curves with maximal automorphism group is
%0-dimensional.

\subsection{The number of automorphism groups for a fixed genus}
%%%%%%%%%%%%%%%%%%%%%%%%%%%%%%%%%%%%%%%%%%%%%%%%%%%%%%%%%%%%%%%%%%%%%%%%%%
For a fixed $g$ we denote by $N_g$ the number of groups that occur as automorphism groups of genus $g$
curves. We would like to determine what happens to $N_g$ as $g$ increases.

Let $n \in \Z$ such that $n=p_1^{\a_1} \cdots p_s^{\a_s}$. Denote by $\div (n)$ the number of divisors of
$n$. It is well known that $\div (n)= \prod_{i=1}^s (\a_i+1)$. Further, we denote by $\ddiv (n)$ the number
of even divisors of $n$. We have the following lemma:
\begin{lemma}
Let $g$ be fixed. Then the number of automorphism groups that can occur as automorphism groups $\Aut (\X_g)$
of genus $g$ hyperelliptic curves is

i) if $\bAut(\X_g) \iso \Z_n$ then $n_1=\div (g+1)+ \div (2g+1)+\div (2g)-1 $

ii) if   $\bAut(\X_g) \iso D_n$ then $n_2=3\ddiv (g+1) + 2 \ddiv (g) + \div (g) -2 $

iii) if   $\bAut(\X_g) \iso A_4$ and $g > 6$ then $n_3=1$

iv) if  $\bAut(\X_g) \iso S_4$ then $n_4=1$ or 0.

v) if  $\bAut(\X_g) \iso A_5$ then $n_5=1$ or 0.
\end{lemma}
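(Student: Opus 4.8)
Everything is to be extracted from the classification of $\Aut(\X_g)$ above and from Table~\ref{tab_1}, so the first step I would take is to pin down a non-emptiness criterion: for a row of the table with group $G$, reduced group $\G$ and signature $\bC$, the locus $\L$ is non-empty and has $G$ as the \emph{full} automorphism group of its generic member exactly when $\d(G,\bC)$ is a non-negative integer and the restrictions in the ``$\d, n, g$'' column are satisfied. One implication is free: $\d(G,\bC)$ is the dimension of the relevant Hurwitz space, and the Riemann--Hurwitz computation that produced the table forces this number to be a non-negative integer. For the converse I would use that $\G\hookrightarrow PGL_2(\C)$ is rigid, so the branch points $q_1,\dots,q_r$ of $\ff$ are fixed and only the images of the Weierstrass points move, together with the fact that every group $G$ of Section~2 is generated by a tuple of the prescribed conjugacy type with trivial product (directly from the presentations \eqref{groups1} and from the central extensions of $A_4,S_4,A_5$). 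The side conditions $n<g+1$, $n<g$, $\d\neq 0$, $g\neq 2$ are exactly the ones guaranteeing $\deg f\geq 3$ in the normal forms $y^2=f(x^n)$, $y^2=xf(x^n)$, i.e.\ that the generic curve of the family does not pick up an extra M\"obius symmetry enlarging $G$.

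Granting this, part (i) becomes a count over rows 1--3 ($G=\Z_2\o\Z_n,\ \Z_{2n},\ \Z_{2n}$). Integrality of $\d$ turns into $n\mid 2g+2$, $n\mid 2g+1$, $n\mid 2g$; one records the first family for even $n$ only (for odd $n$ one has $\Z_2\o\Z_n\iso\Z_{2n}$, a cyclic group belonging with rows 2--3); the even divisors of $2g+2=2(g+1)$ number $\ddiv(2(g+1))=\div(g+1)$, the divisors $n\geq 2$ of $2g+1$ number $\div(2g+1)-1$ (the value $n=1$ would force dimension $>\dim\H_g$), and the divisors $n\geq 1$ of $2g$ number $\div(2g)$ (with $n=1$ legitimately giving $\H_g$ itself, of dimension $2g-1$). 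Since $\gcd(2g,2g+1)=\gcd(2g+1,2g+2)=1$, $\gcd(2g,2g+2)=2$ and $\Z_2\o\Z_2\not\iso\Z_4$, no two of these loci coincide, and the sum is $\div(g+1)+\div(2g+1)+\div(2g)-1$. Part (ii) is the analogous count over rows 4--9, but requires more care: I would first invoke the parity convention of the Remark following Table~\ref{tab_1} (the twisted families $\Z_2\o D_n$, $V_n$, $H_n$, $U_n$, $G_n$ are recorded for $n$ even) together with $\Z_2\o D_n\iso D_{2n}$ for $n$ odd, $V_2\iso D_8$, $H_2\iso U_2$, and $H_{4n}\iso G_{4n}$ for $n$ odd, so that the six rows become pairwise non-isomorphic; the three rows with a $\tfrac{g+1}{n}$-type dimension (4,5,7) then contribute $3\,\ddiv(g+1)$ and the three with a $\tfrac gn$-type dimension (6,8,9, the $D_{2n}$-row admitting every divisor of $g$) contribute $2\,\ddiv(g)+\div(g)$, while the overlaps and the low-$n$, low-$g$ boundary cases account for the constant $-2$.

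Parts (iii)--(v) reduce to modular arithmetic on the denominators $6,12,30$. For $\G\iso A_4$ the six rows 10--15 carry dimension numerators $g+1,g-1,g-3$ (for $\Z_2\o A_4$) and $g-2,g-4,g-6$ (for $SL_2(3)$); reduced mod $6$ these are $g+1,g+5,g+3,g+4,g+2,g$, a complete residue system, so for every $g$ exactly one row has integral dimension and hence at most one non-empty locus --- and $g>6$ makes that dimension positive, meeting the conditions $\d\neq 0$ on rows 12,13,15; thus $n_3=1$. For $\G\iso S_4$ the eight numerators $g+1,g-3,g-2,g-6,g-5,g-9,g-8,g-12$ reduce mod $12$ to the eight residues $g,\,g{+}1,\,g{+}3,\,g{+}4,\,g{+}6,\,g{+}7,\,g{+}9,\,g{+}10$ (missing $g{+}2,g{+}5,g{+}8,g{+}11$), so there is an integral-dimension row precisely when $g\not\equiv 1,4,7,10\pmod{12}$, and whether the resulting locus is then genuinely non-empty ($\d\geq 0$) gives $n_4\in\{0,1\}$. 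Likewise for $\G\iso A_5$ the eight numerators reduce mod $30$ to the residues $g,\,g{+}1,\,g{+}6,\,g{+}10,\,g{+}15,\,g{+}16,\,g{+}21,\,g{+}25$, only eight of the thirty, so $n_5\in\{0,1\}$.

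The routine part is the non-emptiness criterion of the first paragraph and the polyhedral cases. The main obstacle is the accounting in parts (i) and (ii): deciding exactly which of the inequality and parity restrictions actually remove a parameter, checking that no two rows or two values of $n$ give the same abstract $G$ in the relevant range, and reassembling the various $\div(\cdot)$ and $\ddiv(\cdot)$ into the stated closed forms while keeping faithful track of the small-$n$ and small-$g$ boundary cases --- these are precisely what produce the additive constants $-1$ and $-2$.
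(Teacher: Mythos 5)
First, be aware that the paper does not actually prove this lemma: its entire proof reads ``The proof is elementary and we skip the details,'' so your attempt cannot be measured against an argument in the text and must stand on its own. On that footing, your parts (iii)--(v) are essentially complete: the observation that the six numerators $g+1,\ g-1,\ g-3,\ g-2,\ g-4,\ g-6$ form a complete residue system modulo $6$ (so exactly one of rows 10--15 has integral, and for $g>6$ positive, dimension), while the eight numerators in the $S_4$ and $A_5$ blocks occupy only eight of the $12$, respectively $30$, residue classes, correctly yields $n_3=1$ and $n_4,n_5\in\{0,1\}$. That is the right mechanism and I see no gap there.

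Parts (i) and (ii), however, are plans rather than proofs, and at least one step fails as stated. In (i) you count row 1 only for even $n$, on the grounds that for odd $n$ the group $\Z_2\o\Z_n\iso\Z_{2n}$ is ``a cyclic group belonging with rows 2--3.'' But it does not belong there: if $n\geq 3$ is an odd divisor of $g+1$, then $\Z_{2n}$ arises from row 2 only if $n\mid 2g+1$ and from row 3 only if $n\mid 2g$, and since $\gcd(g+1,2g+1)=1$ and $\gcd(g+1,2g)$ divides $2$, neither can happen. So either these odd-$n$ groups are genuinely omitted by the count $\div(g+1)$ --- in which case your derivation of the stated formula does not close --- or they must be discarded for some other reason (for instance the generic curve $y^2=f(x^n)$ with $n$ odd acquiring a strictly larger group), which you would have to supply. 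You also never reconcile the side conditions $n<g+1$ and $n<g$ with the divisor counts: the even divisors $2g+2$, and $g+1$ when it is even, are included in $\ddiv(2(g+1))=\div(g+1)$ yet violate $n<g+1$. Part (ii) is weaker still: the additive constant $-2$, which you yourself identify as the crux, is simply asserted to come from ``overlaps and boundary cases''; given the web of coincidences $\Z_2\o D_n\iso D_{2n}$ ($n$ odd), $V_2\iso D_8$, $H_2\iso U_2$, $H_{4n}\iso G_{4n}$ ($n$ odd) that you correctly list, this is exactly the part of the lemma that requires an actual computation, and it is missing.
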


\begin{proof} The proof is elementary and we skip the details.
\end{proof}
%
%\begin{thm}
%Let $g\geq 6$ be fixed. Then the number of automorphism groups that can occur as automorphism groups of genus
%$g$ hyperelliptic curves is
%
%\[ N_g=\div (g+1) + 2\div (2g+1) + \div (2g) + 3\ddiv (g+1) + 2\ddiv (g) + \div (g) -2 + xx. \]
%
%Furthermore, if $g $ is odd then
%
%\[ N_g=\div (g+1) + 2\div (2g+1)+ 3 \div (g)   + 3\ddiv (g+1)    -2 + xx. \]
%\end{thm}

%\begin{proof}
%Let $g\geq 6$ be fixed. Then we add all the cases of the above lemma. ......
%\end{proof}

%It is interesting from the number theoretic point of view to see what happens  to $N_g$ asymptotically or
%weather there are consecutive $g$'s such that $N_g$ is fixed.

%*************************************
\section{Moduli spaces of covers}
%***************************************
Fix an integer $g\ge2$ and a finite group $G$. Let $C_1,...,C_r$ be nontrivial conjugacy classes of $G$. Let
$\bC=(C_1,...,C_r)$, viewed as unordered tuple, repetitions are allowed. We allow $r$ to be zero, in which
case $\bC$ is empty. Consider pairs $(\X,\mu)$, where $\X$ is a curve and $\mu: G\to\Aut(\X)$ is an injective
homomorphism. We will suppress $\mu$ and just say $\X$ is a curve with $G$-action, or a $G$-curve. Two
$G$-curves $\X$ and $\X'$ are called equivalent if there is a $G$-equivariant isomorphism $\X\to \X'$.

We say a $G$-curve $\X$ is \textbf{of ramification type} $(g, G, \bC)$ if the following holds: Firstly, $g$
is the genus of $\X$. Secondly, the points of the quotient $\X/G$ that are ramified in the cover $\X\to \X/G$
can be labeled as $p_1,...,p_r$ such that $C_i$ is the conjugacy class in $G$ of distinguished inertia group
generators over $p_i$ (for $i=1,...,r$). (Distinguished inertia group generator means the generator that acts
in the tangent space as multiplication by $\exp(2\pi\sqrt{-1}/e)$, where $e$ is the ramification index). For
short, we will just say $\X$ is of type $(g,G,\bC)$.

If $\X$ is a $G$-curve of type $(g,G,\bC)$ then the genus $g_0$ of $\X/G$ is given by the Riemann-Hurwitz
formula. Define $\H=\H(g,G,\bC)$ to be the set of equivalence classes of $G$-curves of type $(g,G,\bC)$. By
covering space theory, $\H$ is non-empty if and only if $G$ can be generated by elements
$\a_1,\b_1,...,\a_{g_0},\b_{g_0},\g_1,...,\g_r$ with $\g_i\in C_i$ and $\prod_{j}\ [\a_j,\b_j]\  \prod_{i}
\g_i =  1$, where $[\a,\b]=\ \a^{-1}\b^{-1}\a\b$.

Let $\M_g$ be the moduli space of genus $g$ curves, and $\M_{g_0,r}$ the moduli space of genus $g_0$ curves
with $r$ distinct marked points, where we view the marked points as unordered (contrary to usual procedure).
Consider the map $\Phi:\ \H\ \to \ \M_{g}$, forgetting the $G$-action, and the map $\Psi:\ \H\ \to \
\M_{g_0,r} $ mapping (the class of) a $G$-curve $\X$ to the class of the quotient curve $\X/G$ together with
the (unordered) set of branch points $p_1,...,p_r$. If $\H\ne\emptyset$ then $\Psi$ is surjective and has
finite fibers, by covering space theory. Also $\Phi$ has finite fibers, since the automorphism group of a
curve of genus $\ge2$ is finite. By \cite{Be}, the set $\H$ carries a structure of quasi-projective variety
(over $\C$) such that the maps $\Phi$ and $\Psi$ are finite morphisms. If $\H\ne\emptyset$ then all
components of $\H$ map surjectively to $\M_{g_0,r}$ (through a finite map), hence they all have the same
dimension
\[\d(g,G,\bC):= \ \ \dim\ \M_{g_0,r} \ \ = \ \ 3g_0-3+r\]   Since also $\Phi$ is a finite map, we get

\begin{lemma} \label{Lemma1}
Let $\M(g,G,\bC)$ denote the image of $\Phi$, i.e., the locus of genus $g$ curves admitting a $G$-action of
type $(g,G,\bC)$. If this locus is non-empty then each of its components has dimension $\d(g,G,\bC)$.
\end{lemma}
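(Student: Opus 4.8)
The plan is to read off Lemma~\ref{Lemma1} essentially as a corollary of the discussion immediately preceding it, making explicit the two inputs: the dimension count for the Hurwitz space $\H=\H(g,G,\bC)$, and the fact that $\Phi\colon\H\to\M_g$ is a finite morphism. First I would recall, from \cite{Be}, that $\H$ carries the structure of a quasi-projective variety over $\C$ for which $\Phi$ and $\Psi$ are finite morphisms, and that (when $\H\neq\emptyset$) $\Psi\colon\H\to\M_{g_0,r}$ is surjective with finite fibers. Since $\M_{g_0,r}$ is irreducible of dimension $3g_0-3+r$, and a finite surjective morphism is in particular dominant with $0$-dimensional fibers, every irreducible component of $\H$ that dominates $\M_{g_0,r}$ has dimension exactly $\d(g,G,\bC)=3g_0-3+r$; and covering space theory guarantees that \emph{all} components of $\H$ do dominate $\M_{g_0,r}$, because monodromy/branch data can be deformed freely over any path in $\M_{g_0,r}$ while staying within the same component. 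Hence $\dim\H=\d(g,G,\bC)$ on each component.

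Next I would transfer this to $\M(g,G,\bC)=\Phi(\H)$. The image of an irreducible variety under a morphism is irreducible, and the image of a finite (hence quasi-finite and proper, so closed) morphism of a $d$-dimensional variety has dimension exactly $d$, since fibers are finite. Therefore each component $Z$ of $\M(g,G,\bC)$ is the image of some component of $\H$ of dimension $\d(g,G,\bC)$, and $\dim Z=\d(g,G,\bC)$ as well. One small point to be careful about: a priori a component of $\M(g,G,\bC)$ could be the image of a union of several components of $\H$, or could be contained in the closure of the image of another; but since $\Phi$ has finite fibers, $\dim\Phi(\H')=\dim\H'$ for every component $\H'$ of $\H$, and $\M(g,G,\bC)=\bigcup_{\H'}\Phi(\H')$ is a finite union of irreducible closed sets all of the same dimension $\d(g,G,\bC)$, so each irreducible component of the union again has that dimension.

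The main obstacle, such as it is, is not a deep one: it is simply making sure the two ``finite fiber'' statements are used correctly — namely that finiteness of fibers of a morphism between varieties forces image dimension to equal source dimension (this uses that we are over $\C$ and working with varieties, where fiber dimension is upper semicontinuous and the generic fiber dimension equals $\dim(\text{source})-\dim(\overline{\text{image}})$). The other point requiring a word is the claim that \emph{all} components of $\H$ surject onto $\M_{g_0,r}$ rather than just some of them; this is exactly the standard fact from covering space theory invoked in the paragraph before the lemma (the braid/mapping-class group acts transitively enough on Nielsen classes within a fixed topological type that the family over $\M_{g_0,r}$ has no ``vertical'' components), so I would just cite that discussion. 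With these two inputs in hand the proof is a two-line dimension chase, which is presumably why the authors state it as a lemma without separate proof.
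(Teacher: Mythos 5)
Your proposal is correct and follows essentially the same route as the paper: the paragraph preceding the lemma already records that (by \cite{Be}) $\Phi$ and $\Psi$ are finite morphisms, that every component of $\H$ maps onto $\M_{g_0,r}$ and hence has dimension $\d(g,G,\bC)=3g_0-3+r$, and the lemma is then deduced from finiteness of $\Phi$ exactly as you do. Your extra remarks (why finite fibers preserve dimension of images, and why each component of the image of a finite union of equidimensional components is again of that dimension) just make explicit what the paper leaves implicit.
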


For a description of some of the loci $\M(g,G,\bC)$, not in the hyperelliptic locus, in terms of the theta
nulls see \cite{PS}.

Next we focus on the hyperelliptic locus.  Let $\phi_0: \X_g \to \bP^1$ be the cover which corresponds to the
degree 2 extension $K/k(X)$. Then, $\psi:= \phi \circ \phi_0$ has monodromy group $G:=\Aut(\X_g)$.  From
basic covering theory, the group $G$ is embedded in the group $S_n$, where $n=\deg \psi$. There is an
$r$-tuple $\bar \s := (\s_1, \dots , \s_r)$, where $\s_i\in S_n$ such that  $\s_1, \dots , \s_r$ generate $G$
and $\s_1 \cdots \s_r=1$. The signature of $\psi$ is an $r$-tuple of conjugacy classes  $\bC :=(C_1, \dots ,
C_r)$ in $S_n$ such that $C_i$ is the conjugacy class of $\s_i$.  We use the notation $n^p$ to denote the
conjugacy class of permutations which are a product of $p$ cycles of length $n$. Using the signature of
$\phi: \bP^1 \to \bP^1$  and the Riemann-Hurwitz formula, one finds out the signature of $\psi: \X_g \to
\bP^1$ for any given  $g$ and $G$.

The following theorem describes the list of all the subvarieties of the hyperelliptic moduli which are
determined by group actions.

\begin{thm}
For each  $g\geq 2$, the groups $G$  that occur as automorphism groups and their signatures $\bC$ are given
in Table~\ref{tab_1}. Moreover; the locus  $\H (G, \bC)$ in $\H_g$ of curves with automorphism group $G$ and
signature $\bC$ is an irreducible algebraic variety of dimension $\d (G, \bC)$ as given in Table~\ref{tab_1}.
\end{thm}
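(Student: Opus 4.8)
The plan is to prove the three assertions of the theorem separately, all from the reduction-to-$PGL_2(\C)$ setup already established. First, for the \emph{list of groups and signatures}, I would invoke the structure result already proved: every automorphism group $G$ of a hyperelliptic $\X_g$ sits in the central extension $1\to\langle w\rangle\to G\to\G\to1$ with $\G$ one of $\Z_n,D_n,A_4,S_4,A_5$, and the eight cases of Eq.~\eqref{cases} for the intersection $V\cap W$ exhaust the possibilities for how the branch points of $\phi$ interact with the Weierstrass locus. For each such case the earlier theorem identifies $G$ explicitly and Lemma (the one attributed to \cite{GS}) pins down whether each $\s_i$ lifts to an element of order $|\s_i|$ or $2|\s_i|$. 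Combining this with the known ramification $(n,n),(2,2,n),(2,3,3),(2,4,4),(2,3,5)$ of $\phi:\bP^1_x\to\bP^1/\G$ and applying Riemann--Hurwitz to the composite $\psi=\phi\circ\phi_0:\X_g\to\bP^1_z$ produces the conjugacy-class tuple $\bC=(C_1,\dots,C_r)$: each unramified fiber of $\phi$ of size $|\G|$ over a non-branch Weierstrass value contributes a factor $2^{|\G|}$, and each branch point $q_i$ contributes $|\s_i|^{|\G|/|\s_i|}$ or $(2|\s_i|)^{|\G|/|\s_i|}$ according to the lift. This is bookkeeping: one runs through rows 1--31 of Table~\ref{tab_1} and checks the arithmetic, noting (as in the Remarks) the coincidences such as cases 2,3 of Eq.~\eqref{cases} collapsing for $D_n$ because the branching of $q_1,q_2$ agrees.

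Second, for the \emph{dimension} $\d(G,\bC)$, I would apply Lemma~\ref{Lemma1}: since $\X_g/G=\bP^1_z$ has genus $g_0=0$ and the cover $\psi$ has exactly $r$ branch points, $\d(G,\bC)=3g_0-3+r=r-3$ when $r\ge 3$, and $=r-3$ reads as the appropriate small value in the two-branch-point cyclic cases (where $\X/\G$ already forces $r$ to be large enough after adjoining the extra Weierstrass branch points, so that in all listed cases $r\ge 3$). Here $r$ is just the number of branch points of $\psi$, i.e. the number of distinct values among the $q_i$ and the Weierstrass images, and I would verify case by case that $r-3$ matches the closed-form expressions in the $\d(G,\bC)$ column — e.g. for $\Z_2\o\Z_n$ one has $r=\frac{2g+2}{n}+2$ and $r-3=\frac{2g+2}{n}-1$. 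The constraints like $n<g+1$, $\d\ne0$, $n\equiv0\bmod2$ appearing in Table~\ref{tab_1} are exactly the conditions ensuring $r\ge3$ and that the required generating system exists.

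Third, for \emph{irreducibility} of $\H(G,\bC)$, the Hurwitz-space formalism recalled in Section~3 shows each component of $\H(g,G,\bC)$ maps finitely and surjectively onto $\M_{g_0,r}=\M_{0,r}$, and $\Phi$ is finite onto $\M(g,G,\bC)=\H(G,\bC)$; since $\M_{0,r}$ is irreducible, it suffices to show $\H(g,G,\bC)$ itself is irreducible, equivalently that the braid group action on the set of Nielsen classes (generating tuples $(\s_1,\dots,\s_r)$ in the prescribed classes with product $1$, up to conjugacy) is transitive. I would prove this by exploiting the rigidity of the genus-zero situation: the tuple $(\s_1,\dots,\s_r)$ for $\psi$ is built canonically from the $\G$-tuple for $\phi$ (which is rigid — the groups $\Z_n,D_n,A_4,S_4,A_5$ with their platonic signatures are classically known to be rigid, see \cite{Kl}) together with the choice of which fibers become Weierstrass; the extra $2^{|\G|}$-entries coming from non-branch Weierstrass points can be permuted freely by braids, and any two lifts of a fixed $\G$-tuple differ by the $\langle w\rangle$-ambiguity which is absorbed into the conjugacy class data. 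So all valid Nielsen tuples lie in one braid orbit, giving a single component. The main obstacle is precisely this last point: making the transitivity argument clean and uniform across all five families rather than checking each central extension (the twisted/double dihedral, generalized quaternion, semidihedral groups and $SL_2(3),GL_2(3),W_2,W_3,SL_2(5)$) by hand; I expect one must either cite the classical rigidity of the polyhedral groups and argue that the Weierstrass-fiber choices do not break it, or fall back on the parametric equations of Section~4, which exhibit $\H(G,\bC)$ explicitly as the image of an irreducible rational parameter space (a quotient of an open subset of affine space by the finitely many Möbius normalizations), thereby giving irreducibility directly and bypassing the braid computation altogether.
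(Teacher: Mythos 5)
Your plan for the first two assertions coincides with the paper's: the list of groups and signatures is bookkeeping from the earlier classification theorem together with the lifting lemma and Riemann--Hurwitz, and the dimension count is exactly Lemma~\ref{Lemma1} with $g_0=0$, giving $\d=3g_0-3+r=r-3$. Where you genuinely diverge is on irreducibility. The paper makes no conceptual argument there: for the cases with $\G\iso\Z_n$, $A_4$, $A_5$ it cites prior work (\cite{jaa}, \cite{a5}), and for the remaining cases (reduced group $D_n$ or $S_4$) it verifies transitivity of the braid action on Nielsen classes by explicit machine computation (GAP together with Magaard's Braid program), finding a single braid orbit in each case. Your primary route --- deducing braid transitivity from the rigidity of the polyhedral generating tuples of $\phi$ --- is the weak point of your plan, and you are right to flag it: for a degree $2$ central extension $1\to\<w\>\to G\to\G\to 1$ the braid orbits of lifted tuples are not controlled by the orbits of the projected tuples alone (a lifting invariant valued in $\<w\>$ can split one $\G$-orbit into several $G$-orbits), so the claim that the $\<w\>$-ambiguity is ``absorbed into the conjugacy class data'' is precisely what would need to be proved; this obstruction is presumably why the paper falls back on computation. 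Your fallback, however, is sound and arguably preferable: the explicit families of Section~4 exhibit $\H(G,\bC)$ as the image of an open subset of the affine space of parameters $(\l_1,\dots,\l_\d)$ (modulo a finite group of normalizations), which is irreducible, so its image in $\H_g$ is irreducible. That route buys independence from the computer and from the cited papers, at the price of verifying that every curve with the prescribed action actually occurs in the parametric family --- which is the content of the Weierstrass-point analysis of Sections~2 and~4 and is therefore available to you. As written, though, your proposal hedges between the two routes rather than completing either, so to count as a proof it must commit to (and carry out) the parameterization argument.
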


\begin{proof}The dimension of each locus is an immediate consequence of Lemma~\ref{Lemma1}. Next, we will
show the irreducibility of the Hurwitz space $\H (G, \bC)$ in each case.

The cases 1-3, 10-15, and 24-31 of Table~\ref{tab_1} follow from the results of \cite{jaa} and \cite{a5}
respectively. It is left to prove cases 4-9 and 16-23 which correspond to the cases when $\G$ is isomorphic
to $D_n$ and $A_5$ respectively. To prove this we make use the GAP and the Braid program written by K.
Magaard. For each case we construct the group $G$ as a subgroup of $S_{|G |}$. In each case we find a
generating tuple and compute its braid action. There is only one braid orbit which shows that the
corresponding space is irreducible. This completes the proof.

\end{proof}

%*************************************************
\section{Determining equations of families of curves}
%*************************************************
In this section we state the equations of curves in each case of Table~\ref{tab_1}. For a more detailed
treatment of these spaces, including proofs, the reader can check results in  \cite{Bu}, \cite{a5},
\cite{jaa}, \cite{issac}, among others. Recall that  $\G$ is the monodromy group of a cover $\ff: \bP^1 \to
\bP^1$ with signature $\bar \s:=(\s_1, \s_2, \s_3)$ as in  Table~\ref{tab_1}.  We fix the coordinates in
$\bP^1$ as $x$ and $z$ respectively and from now on denote the cover $\ff : \bP^1_x \to \bP^1_z$.  Thus, $z$
is a rational function in $x$ of degree $|\G|$. We denote by $q_1, q_2, q_3 $ the corresponding branch points
of $\ff$. Let $S$ be the set of branch points of $\f: \X_g \to \bP^1$. Clearly $q_1, q_2, q_3 \in S$. As
above $W$ denotes the images in $\bP^1_x$ of Weierstrass points of $\X_g$ and $V:=\cup_{i=1}^3 \ff^{-1}
(q_i)$.

Let $$z= \frac {\nf (x) } { \df (x) }$$ where $\nf, \df \in k[x]$.  For each branch point $q_i$, $i=1,2,3$ we
have $z-q_i = \frac {p(x)} {\df (x)}$. Hence, we have the degree $|\G|$ equation
$$ p(x)=\nf (x)  - q_i \cdot \df (x) =0,$$
where the multiplicity of all the roots is the same and correspond to the ramification index of $q_i$ (i.e.,
the index of the normalizer in $\G$ of $\s_i$). We denote the ramification of $\ff: \bP^1_x \to \bP^1_z$, by
\begin{equation}
\begin{split}
\varphi_m^r (x) & : = \nf (x)  - q_1 \cdot \df (x),\\
\chi_n^s  (x)    & :=  \nf (x)  - q_2 \cdot \df (x) \\
  \psi_p^t (x) & :=   \nf (x)  - q_3 \cdot \df (x),  \\
  \end{split}
\end{equation}
where the subscript denotes the degree of the polynomial and the superscript is the index of the normalizer
in $\G$ of $\s_i$.

It is obvious that
\[\phi^{-1} \left( S \setminus \{ q_1, q_2, q_3\} \right) \subset W. \]
Let  $\l \in S \setminus \{ q_1, q_2, q_3\}$. The points in the fiber $\phi^{-1} (\l)$ are the roots of the
equation:
\begin{equation}\label{eq_l}
 \nf (x) -\l \cdot \df (x) =0
\end{equation}
To determine the equation of the curve we simply need to determine the Weierstrass points of the curve. Let
\[G(x) := \prod_{\l \in S \setminus \{ q_1, q_2, q_3\}} \left( \nf (x) -\l \cdot \df (x)    \right) .\]

For each fixed $\phi$ there are the following cases and the corresponding equation of the curve is $y^2=f(x)$
where $f(x)$ is as below:

\begin{equation}
\begin{split}
\aligned
1)\quad & V \cap W = \emptyset,  \\
2)\quad & V \cap W =\f^{-1} (q_1),  \\
3)\quad & V \cap W =\f^{-1} (q_2),  \\
4)\quad & V \cap W =\f^{-1} (q_3),  \\
5)\quad & V \cap W =\f^{-1} (q_1) \cup \f^{-1} (q_2),  \\
6)\quad & V \cap W =\f^{-1} (q_2) \cup \f^{-1} (q_3),  \\
7)\quad & V \cap W =\f^{-1} (q_1) \cup \f^{-1} (q_3),  \\
8)\quad & V \cap W =\f^{-1} (q_1) \cup \f^{-1} (q_2) \cup \f^{-1} (q_3), \\
\endaligned
\qquad \aligned
&  G(x), \\
& \varphi (x) \cdot G(x) ,\\
& \chi (x)\cdot G(x) ,\\
&  \psi(x)\cdot G(x),\\
&  \varphi(x)\cdot \chi(x)\cdot G(x), \\
&  \chi(x)\cdot \psi(x)\cdot G(x),\\
&  \varphi(x)\cdot  \psi(x)\cdot G(x),\\
& \varphi(x)\cdot  \chi(x)\cdot  \psi(x)\cdot G(x)\\
\endaligned
\end{split}
\end{equation}

Since we know $z = \frac {\nf (x) } { \df (x) }$ in each case, then we can easily compute the equation of the
curve for all cases of Table~\ref{tab_1}.

\begin{remark}
When $\G = \Z_n, D_n, A_4$, then two of the branch points of $\ff : \bP^1 \to \bP^1$ correspond to the same
conjugacy class. Then, cases 2) and 3) are the same. So are also the cases 6) and 7). This explains the
number of cases in Table~\ref{tab_1}.
\end{remark}

\begin{remark}
The polynomial $G(x)$ can also be computed by computing an orbit of $\G$ using the generators in
Eq.~\eqref{eq1}. This follows from the fact that $\G$ is the monodromy group of $\phi: \bP^1_x \to \bP^1_z$
and therefore has a complete orbit on the fiber $\phi^{-1} (\l)$ for each $\l \neq q_1, q_2, q_3$.
\end{remark}

\subsection{$\G \iso \Z_n$} The branch points of the cover $\phi (x) = x^n$ are $0$ and $ \infty$.
For $\l \in S \setminus \{ 0, \infty\}$, the points $\phi^{-1} (\l)$ satisfy the equation $G_\l (x)= x^n-\l$.

We have
\begin{equation}
G(x)= \prod_{\l \in S \setminus \{ 0, \infty\}} G_{\l} (x)=
 x^{nt}+ \dots + a_i x^{n(t-i)} + \dots a_{t-1} x^n+1,
\end{equation}
Then $\X_g$ belongs to cases 1, 2, 3 in Table~\ref{tab_1}. The equation of each family is $y^2=F(x)$,  where
$F(x)$ is $ G(x), G(x), x\cdot  G(x)$ and  $t$ respectively $ \frac {2g+2} n, \frac {2g+1} n, \frac {2g} n$.
See \cite{jaa} for details.
%
%******************************************************
\subsection{$\G \iso D_n$}
%******************************************************
In this case, the branch points of $z(x)$ are $\infty$, and $\pm 2$. We have $G(x)$ as below:
$$G(X)= \prod_{i=1}^\d (X^{2n} - \l_i X^n +1).$$
Then,
\begin{equation}
\begin{split}
G(X)= & X^{2nt}+ a_1 X^{2nt-n} + \dots + a_t X^{nt} + a_{t-1} X^{(n-1)t} + \dots  + a_1 X^n +1
\end{split}
\end{equation}
where $a_i, i=1,\dots t$ are polynomials in terms of the symmetric polynomials $s_1, \dots , s_t$ of $\l_i$
(i.e., etc.).
\[a_1=s_1, a_2=t+s_2, a_3=(t-1)s_1+s_3, a_4:=\left( \overset{t} {n/2}  \right) + (t-2)s_2 +s_4, \]
Then, each family is parameterized as in Table~\ref{tab2}.

%****************************************************
\subsection{$\G \iso  A_4$} The branch points of the cover $\phi$ are $\{ \infty, 6 i \sqrt{3}, - 6i\sqrt{3}\}$.
The polynomials over these branch points are
\begin{equation}
\begin{split}
\varphi_m (x) & : = x^4+2i\sqrt{3} x^2 +1,\\
\chi_n  (x)    & :=  x^8+14x^4+1,\\
  \psi_p (x) & :=    x(x^4-1) \\
  \end{split}
\end{equation}
For $\l \in S \setminus \{\infty, 6 i \sqrt{3}, - 6i\sqrt{3} \}$ (equivalently $\l^2+108\neq 0$) we have
%
%\begin{small}
\begin{equation}  \label{G}
G_{\l} (x)=x^{12} - \l x^{10} - 33 x^8 + 2 \l x^6 - 33 x^4 - \l x^2+1,
\end{equation}
%\end{small}
%
There are $\d = \frac {g+1} 6$ points in $S \setminus \{ \infty, \pm 6 i \sqrt{3}\}$. Denote by
\[G(x):=\prod_{i=1}^{\delta }\left(x^{12}- \l_i x^{10}-33 x^8 + 2 \l_i x^6 -33 x^4-\l_i x^2+1\right)\]
Then, each family is parameterized as cases 10-15 in Table~\ref{tab2}.
%
%****************************************************
\subsection{$\G \iso S_4$ }
%**************************************************
The branch points of $\phi (x)$ are $\{ 0, 1, \infty\}$. Then,
\begin{equation}
\begin{split}
\varphi(x):= & x^{12} - 33 x^8 - 33 x^4 +1, \\
 \chi(x):= & x^8 + 14 x^4 + 1, \\
  \psi(x):= & x^4-1.
\end{split}
\end{equation}
For $\l \in S \setminus \{ 0, 1, \infty\}$, points in $\phi^{-1} (\l)$ are roots of the polynomial
\[ G_\l (x)=x^{24}+ \l x^{20}+(759-4\l)x^{16}+2(3\l+1288)x^{12}+(759-4\l)x^{8}+\l x^{4}+1\]
There are $\d $ points in $S \setminus \{ 0, 1, \infty\}$, where $\d$ is given as in Table~\ref{tab_1}; see
\cite{jaa} for details. We denote
\begin{equation}\label{eqM}
M(x):=   \prod_{i=1}^\d  G_{\l_i} (x)
\end{equation}
%
%****************************************************
\subsection{$\G \iso  A_5$}
The branch points of $\phi $ are 0, 1728 and $\infty$.  At the place $z=1728$ the function has the following
ramification:
\[\phi (x) - 1728= - \frac {\left( x^{30}+522x^{25}-10005x^{20}-10005x^{10}-522x^5+1\right)^2 }
{ x^5 \, \left(x^{10} + 11x^5-1\right)^5}.\]
Then,
\begin{equation}
\begin{split}
\varphi (x) & = x^{20}-228 x^{15}+494x^{10}+228x^5+1 \\
\chi (x) & = x \ (x^{10} + 11x^5 -1)\\
\psi (x) & =  x^{30}+522x^{25}-10005x^{20}-10005x^{10}-522x^5+1.\\
\end{split}
\end{equation}
Then for each $\l_i \in S \setminus \{0, 1728, \infty\}$ the places in $\phi^{-1} (\l_i)$ are the roots of
the following polynomial
\begin{small}
\begin{equation*}
\begin{split}\scriptstyle
& G_i (x) = -x^{60}+(684 - \l_i) x^{55}-(55 \l_i + 157434) x^{50}-(1205 \l_i-12527460 ) x^{45} \\
& -(13090\l_i\!+\!77460495)x^{40}\!+\!(130689144\!-\!69585\l_i) x^{35}\!+\!(33211924\!-\!134761\l_i)x^{30} \\
& +(69585\l_i-130689144) x^{25}\!-(13090 \l_i\!+\!77460495)x^{20}\!-(12527460-1205\l_i) x^{15} \\
& -(157434+ 55 \l_i) x^{10}+( \l_i-684) x^5-1
\end{split}
\end{equation*}
\end{small}
Then, we let
\begin{equation}\label{eqL}
\Lambda (x) := \prod_{i=1}^\d G_i (x)
\end{equation}

%\newpage

%*****************************************************************************************

\begin{table}[ht!]
 %      \vskip 0.2cm
\begin{center}
\begin{tabular}{||c|c||}
\hline \hline
\#  & $y^2=f(x) $  \\
 \hline \hline
 1 &    $x^{2g+2} + a_1 x^{n (t-1)} + \dots + a_\d x^n +1$, \ \   $ t= \frac { 2g +2} n$  \\
 2 &  $x^{2g+1} + a_1 x^{n (t-1)} + \dots + a_\d x^n +1$, \ \   $ t=\frac { 2g + 1} n$ \\
 3 & $x ( x^{nt} + a_1 x^{n (t-1)}   + \dots + a_\d x^n +1 )$, \ \  $ t= \frac {2g} n $  \\
   \hline \hline
 4 &    $F(x):= \prod_{i=1}^t (x^{2n} + \l_i x^n +1)$, \ \  $t =\frac {g+1} n$\\
 5 &    $(x^n-1) \cdot F(x)$           \\
 6 &     $x \cdot F(x)$             \\
 7 &      $(x^{2n}-1) \cdot F(x)$             \\
 8 &        $x(x^n-1) \cdot F(x)$             \\
 9 &       $x(x^{2n}-1) \cdot F(x)$              \\
\hline \hline 10 &  $G(x):=\prod_{i=1}^{\delta }
\left(x^{12} - \l_i x^{10} - 33 x^8 + 2 \l_i x^6 - 33 x^4 - \l_i x^2+1 \right)$   \\
11 &     $(x^4+2i \sqrt{3}x^2 +1)\cdot  G(x)$           \\
12 &      $ (x^8+14 x^4+1)  \cdot G(x)$         \\
13 &     $x (x^4-1)  \cdot G(x) $        \\
14 &     $x (x^4-1)(x^4+2i \sqrt{3} x^2 +1) \cdot G(x)$        \\
15 &     $x(x^4-1) (x^8+14x^4+1)\cdot G(x)$           \\
\hline \hline
16 &     $M(x)$           \\
17 &    $S(x) \cdot M(x)$          \\
18 &     $T(x) \cdot M(x)$           \\
19 &     $S(x) \cdot T(x) \cdot M(x)$           \\
20 &    $R(x) \cdot M(x)$           \\
21 &     $R(x) \cdot S(x) \cdot M(x)$          \\
22 &   $R(x) \cdot T(x) \cdot M(x)$          \\
23 &     $R(x) \cdot S(x) \cdot T(x) \cdot M(x)$          \\
\hline \hline
24 &        $\Lambda (x)$        \\
25 &      $\left(x^{20}-228 x^{15}+494x^{10}+228x^5+1\right) \cdot \Lambda (x) $         \\
26 &      $\left( x \ (x^{10} + 11x^5 -1) \right) \cdot \Lambda (x)$         \\
27 &      $ \psi\cdot \Lambda (x)$          \\
28 &     $ \left(x^{20}-228 x^{15}+494x^{10}+228x^5+1\right) \cdot \left( x \ (x^{10} + 11x^5 -1) \right)
\cdot \Lambda (x)$        \\
29 &       $ \left( x \ (x^{10} + 11x^5 -1)\right)\cdot \psi \cdot \Lambda (x)$      \\
30 &        $\left(x^{20}-228 x^{15}+494x^{10}+228x^5+1 \right) \cdot \psi\cdot \Lambda (x)$     \\
31 &$\left(x^{20}-228 x^{15}+494x^{10}+228x^5+1   \right) \cdot \left( x \ (x^{10} + 11x^5 -1)\right) \cdot
\psi\cdot \Lambda (x)$   \\
\hline \hline
\end{tabular}
\end{center}
\vspace{3mm} \caption{The equation of the generic hyperelliptic curve with group $G$.}\label{tab2}
\renewcommand{\arraystretch}{1.24}
\end{table}
\noindent where $M(x)$ is as in Eq.~\eqref{eqM} and $\Lambda (x)$ as in Eq.~\eqref{eqL}.

Further, we notice that curves with automorphism do have something in common.  Let $I_4$ be the invariant of
binary forms defined in terms of transvections as in \cite{issac}.
\begin{lemma}
Let $\X_g$ be a hyperelliptic curve given with equation $y^2=f(x)$ such that $|\Aut (\X_g)|> 2$. Then, $ I_4
(f) =0$.
\end{lemma}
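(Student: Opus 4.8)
The plan is to exploit the uniform structure of the defining polynomial $f(x)$ established in Table~\ref{tab2}: in every case of positive-dimensional locus, $f(x)$ is (up to the Weierstrass factors $\varphi,\chi,\psi$ supported over the branch points of $\ff$) a product of fibers $\ff^{-1}(\l)$, and hence is invariant, as a binary form of degree $2g+2$, under the action of the reduced group $\G \subset PGL_2(\C)$. First I would recall that $I_4$ is the degree-two (in the coefficients of suitable transvections), weight-whatever $SL_2$-invariant of a binary form $f$ of degree $d=2g+2$ constructed via transvections as in \cite{issac}; the key property is that $I_4$ is an $SL_2(\C)$-invariant, so it is (semi)invariant under any Moebius substitution $x \mapsto \frac{ax+b}{cx+d}$, changing only by a power of $(ad-bc)$. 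Thus if $f$ is fixed up to scalar by a non-cyclic-of-small-order subgroup of $PGL_2(\C)$, the value $I_4(f)$ must be fixed up to various roots of unity coming from the weights, and I would argue these constraints force $I_4(f)=0$.

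The key steps, in order: (1) Reduce to the case $\bAut(\X_g)$ is one of $\Z_n$ ($n\ge 2$), $D_n$, $A_4$, $S_4$, $A_5$ — this is immediate since $|\Aut(\X_g)|>2$ forces $\G\ne 1$. (2) Observe that in every row of Table~\ref{tab_1}, the branch locus $W$ of $\X_g \to \bP^1_x$ is a union of $\G$-orbits: the "generic" part $G(x)$ (resp. $M(x)$, $\Lambda(x)$) is a product of full $\G$-orbits $\ff^{-1}(\l_i)$, and each Weierstrass factor $\varphi,\chi,\psi$ is the orbit (with multiplicity one as a point set) over a branch point $q_j$. Hence $f(x)$, as a binary form, is $\G$-semiinvariant: for each $g\in\G\subset PGL_2(\C)$ with matrix $M_g$, $f(M_g\cdot x) = \chi(g)\,f(x)$ for a character $\chi\colon \G \to \C^\times$. (3) Apply $I_4$: since $I_4$ is an $SL_2$-invariant of weight $w$ (the weight is determined by $d$ and the degree of $I_4$ in the binary form coefficients), we get $I_4(f\circ M_g) = (\det M_g)^{w}\, I_4(f)$, while the semiinvariance gives $I_4(f\circ M_g) = \chi(g)^{\deg_{\mathrm{coeff}} I_4}\, I_4(f)$. (4) Choose $g\in\G$ cleverly — e.g. the order-$n$ element $x\mapsto \zeta_n x$ present in all five types (for $n\ge 3$), or elements of $A_4, S_4, A_5$ of order $\ge 3$ — so that the resulting multiplicative constraint $(\text{root of unity})\cdot I_4(f) = I_4(f)$ is nontrivial, forcing $I_4(f)=0$; the small remaining cases ($\G=\Z_2$, which does not occur since then $|\Aut|=4$ but $\bAut=\Z_2$ still counts, and $\G=D_2$) would be handled by a direct check, perhaps using that then $f$ is visibly even or a product of the specific factors $(x^{2n}-\l_i x^n+1)$ which make the relevant transvections vanish.

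The main obstacle will be step (4), i.e.\ pinning down the \emph{weight} of $I_4$ and the character $\chi$ precisely enough to guarantee the multiplier is a \emph{nontrivial} root of unity rather than $1$. This is really a representation-theoretic bookkeeping problem: $I_4$ is a specific covariant/invariant built from transvectants $(f,f)_{2k}$, and one must track how each transvection transforms; if by bad luck the weight conspires with $\chi(g)$ to give multiplier $1$, the argument says nothing. I expect that the cleanest route is to note that $I_4(f)$, being an invariant of the \emph{binary form} $f$ of fixed degree, is actually an $SL_2$-invariant in the classical sense, so $I_4(f) = I_4(f\circ M_g)$ outright whenever $\det M_g = 1$ after scaling; then the real input is that $f\circ M_g$ and $f$ differ by a scalar $\chi(g)$, and homogeneity of $I_4$ (degree $e$ in the coefficients, say) yields $I_4(f) = \chi(g)^{e} I_4(f)$, so one only needs a single $g\in\G$ with $\chi(g)^e \ne 1$. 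Since $\G$ acts on $f$ with a character whose order one can read off from the explicit orbit structure (for $\Z_n$ the orbit $x^n - \l$ is honestly invariant, so one must instead use that $f$ also has the factor $x$ or the shape $x^{2g+2}+\cdots+1$ forcing a symmetry), I anticipate the genuinely delicate cases are exactly $\G=\Z_n$, where $f(x)=g(x^n)$ or $x\,g(x^n)$ and one shows $I_4$ vanishes because the "diagonal" transvectants it is built from vanish identically on forms of this shape — a short direct computation with the definition of $I_4$ from \cite{issac}. I would therefore split the write-up as: general semiinvariance argument disposing of $D_n, A_4, S_4, A_5$, plus a short explicit vanishing computation for the cyclic case.
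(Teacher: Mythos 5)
The paper states this lemma with no proof at all (it only points to \cite{issac} for the definition of $I_4$), so there is nothing to compare your plan against; judged on its own terms, the plan has a genuine gap at precisely the point you flag yourself, namely step (4), and it is not a minor bookkeeping issue. If $I_4$ has degree $e$ in the coefficients of a binary form of degree $d=2g+2$, then as an $SL_2$-invariant it satisfies $I_4(f\circ M)=(\det M)^{ed/2}\,I_4(f)$, while $f\circ M=\chi(M)\,f$ gives $I_4(f\circ M)=\chi(M)^e\,I_4(f)$; your conclusion $I_4(f)=0$ requires $(\det M)^{ed/2}\neq \chi(M)^e$ for at least one $M\in\G$. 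For the normalized models of Table~\ref{tab2} this multiplier is trivial in the cases one checks quickly: for $\G\iso D_n$ with $f=\prod_{i}(x^{2n}+\lambda_i x^n+1)$, the generator $\mathrm{diag}(\zeta_n,1)$ fixes $f$ exactly and contributes $(\det M)^{ed/2}=\zeta_n^{e(g+1)}=1$ because $n\mid g+1$ here, while $(X,Z)\mapsto(Z,X)$ contributes $(-1)^{e(g+1)}$, which is $1$ whenever $e$ is even. You give no computation of $e$, of the weight, or of the characters $\chi$ for $A_4$, $S_4$, $A_5$ that would show the multiplier is a nontrivial root of unity in even one case; so the semiinvariance-plus-weight mechanism that you present as the main engine disposing of $D_n, A_4, S_4, A_5$ may well say nothing at all.

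The burden therefore falls entirely on what you call the ``short direct computation'': showing that $I_4$ vanishes identically on forms of the shape $g(x^p)$ and $x\,g(x^p)$, i.e.\ on binary forms supported on exponents in a single residue class mod $p$. That computation is the actual content of the lemma, it requires the explicit transvectant expression for $I_4$ from \cite{issac}, and you do not carry it out. Note also that if it does work, your entire case division collapses: every nontrivial finite subgroup of $PGL_2(\C)$ contains an element of prime order $p$, conjugate to $x\mapsto\zeta_p x$, so after a coordinate change (harmless, since vanishing of an invariant is a $GL_2$-invariant condition) one always reduces to $f=g(x^p)$ or $f=x\,g(x^p)$. I would restructure the proof around that single reduction plus the explicit verification that $I_4$ kills such lacunary forms, and drop the weight argument altogether.
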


\section{Lattice of groups and inclusion among the loci}
%******************************************************************
In this section we will study the inclusions between the automorphism groups for a fixed genus $g$.  This was
also studied in \cite{We}, however, the author focuses only on automorphism groups with  reduced automorphism
groups isomorphic to a dihedral group.

For any genus $g$ we determine completely the lattice of loci $\H_g^G$ in $\H_g$.  Using GAP we can determine
the list of all groups that occur as automorphism groups of genus $g$. This can be done for any fixed genus
$g\leq 299$.

We identify groups in GAP by their identity in the library of SmallGroups.  This library contains only groups
of order up to 2400. We know that the order of the automorphism group is $\leq 8(g+1)$. Hence, we can
determine the list of groups for all $g \leq 299$.

Let $L$ be the list of all groups occurring as automorphism groups of genus $g$ hyperelliptic curves. Each
entry in $L$ is an ordered pair $[m, n]$ where $m$ denotes the order of the group and $n$ the position that
this group is stored in the Gap library. We order $L$ as follows
\[ [m, n] < [r,s] \quad \textit{ if } \ \ m \leq r \textit{ and } n \leq s\]
Consider $L= \{ G_1, \dots G_N\} $ such that
\[ G_1 < G_2 < \dots < G_N\]
with respect to the above ordering. The incidence matrix of $L$ is
\[ M=[m_{i,j}]\]
where
\[m_{i, j} =\left\{
\aligned & 1, \textit{   if } G_i \textit{ is a subgroup of  } G_j\\
& 0, \textit{otherwise}
\endaligned
\right.
\]

Then $M$ is an upper triangular $N\times N$ matrix. Such matrix can be easily determined for any $g$. We have
implemented a program in GAP which gives the lattice and the incidence matrix for any $g\geq 2$.

\begin{ex}
The lattice of the groups for genus 4 is given in Fig.~\ref{gen4}. Each group is presented by its {\sc GAP}
identity. Each level contains cases with the same dimension.  \\

\begin{tiny}
\begin{center}
\begin{figure}[ht!]
\xymatrix{
\delta= 7  &            &            &(2,1), \Z_2 \ar@{-}[dr] \ar@{-}[dd] \ar@{-}[dddll]  &     \\
\delta= 4  &            &            &                                  &(4,2),  V_4 \ar@{-}[ddd] \ar@{-}[ddl]&  \\
\delta= 3  &            &            &(4, 1), \Z_4\ar@{-}[d]\ar@{-}[ddl]                &  \\
\delta= 2  &(6, 2), \Z_6 \ar@{-}[dd]\ar@{-}[ddr]   &   &(8, 3), D_8\ar@{-}[d]      &   \\
\delta= 1  &            &(8, 4),  G_2 \ar@{-}[d]\ar@{-}[dr]    &(16, 7), D_{16} \ar@{-}[d] &(20, 4),   \Z_2 \times D_{10} \ar@{-}[d] \\
\delta= 0  &(18, 2), \Z_{18}  &(24, 3), SL_2 (3) &(32, 19), U_8                            &(40, 8), V_{10}  \\
}
\caption{The lattice of automorphism groups for hyperelliptic curves of  $g=4$.}
\end{figure}
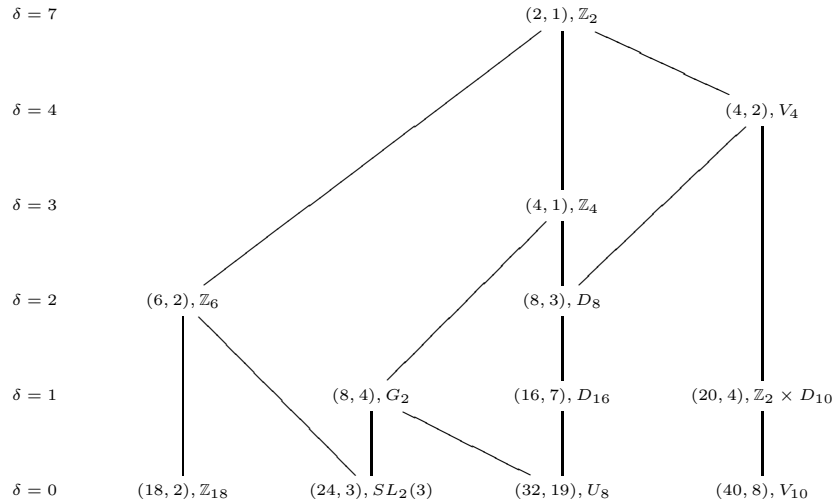\label{gen4}
\end{center}
\end{tiny}
\end{ex}

\section{Concluding remarks}

This main goal of this paper was to give a more unified algebraic approach of the case of automorphism groups
of algebraic curves in zero characteristic. As part II of this project we intend to ask the same questions on
characteristic $p > 0$. Such questions are very much unexplored for algebraic curves defined over fields of
positive characteristic.

One would like to describe such  loci  in terms of invariants of the curves as already done for genus $g=2,
3$; see \cite{GS, GSS}. There have been attempts to do this in \cite{jaa, issac, a5} using invariant of
binary forms. Since such invariants are unknown for degree $\geq 7$ which makes this method unlikely to
succeed. Further, invariants of binary forms are huge polynomials in terms of the coefficients of the curve.
Even, if they are completely known they would be computationally not efficient. Hence, one is tempted to try
to describe such loci in terms of theta nulls; see \cite{PS, PS2, PSW}.


\begin{thebibliography}{99}
%***********************************************************************

\bibitem {Be} {\sc Jos\'e Bertin}, Compactification des sch\'emas
de Hurwitz, {\it C.R. Acad. Sci. Paris I}, vol. 322 (1996), 1063--1066.

\bibitem  {BS}   {\sc R. Brandt and    H. Stichtenoch},  Die Automorphismengrupenn Hyperelliptischer
Kurven.  {\it  Manuscripta Math}  \textbf{  55} (1986), no. 1, 83--92.

\bibitem {Bu} {\sc E. Bujalance,   J. M. Gamboa, G.    Gromadzki},  The  full automorphism
groups   of   hyperelliptic  Riemann   surfaces,   {\it Manuscripta Math.} \textbf{ 79} (1993), no. 3-4,
267--282.

\bibitem {GAP} The GAP~Group, \emph{GAP -- Groups, Algorithms, and Programming,
  Version 4.4}; 2006,
  \verb+(http://www.gap-system.org)+.

\bibitem {GAP2}The GAP Group, \emph{GAP -- Groups, Algorithms, and Programming}, Version 4.4; 2006,
package \emph{SmallGroups},    by E. A. O'Brien, B. Eick, and H. U. Besche\\
(http://www-gap.dcs.st-and.ac.uk/~gap/Packages/sgl.html)

\bibitem  {G} {\sc J. Gutierrez},
A polynomial decomposition algorithm over factorial domains, Comptes Rendues Mathematiques, de Ac. de
Sciences, 13 (1991), 81-86.

\bibitem {GS} {\sc J. Gutierrez and T. Shaska},
Hyperelliptic curves with extra involutions, \emph{LMS J. of Comput.
Math.}, 8 (2005), 102-115.

\bibitem {GSS} {\sc J. Gutierrez, D. Sevilla,  and T. Shaska},,
Hyperelliptic curves of genus 3 and their automorphisms, Lect. Notes Comp, vol 13.(2005),109--123.

\bibitem {Kl} \textsc{F. Klein},
Lectures on the Icosahedron and the Solution of Equations of the Fifth
Degree. Dover Publications, Inc., New York, N. Y., 1956.

\bibitem {PS} {\sc E. Previato and T. Shaska},
Vanishing theta nulls and algebraic curves with automorphisms,
(work in progress).



\bibitem {PS2}   {\sc E. Previato and T. Shaska},
Theta functions and automorphisms of curves, (work in progress).


\bibitem {PSW} {\sc E. Previato, T. Shaska, and S. Wijesiri}, Vanishing theta-nulls on genus  two curves with
split Jacobians, (work in progress).




\bibitem  {a5} {\sc D. Sevilla and T. Shaska},
Hyperelliptic curves with reduced automorphism group $A_5$, \emph{ Appl.
Algebra Engrg. Comm. Comput.}, (to appear).

\bibitem {jaa} \textsc{T. Shaska},
Some special families of hyperelliptic curves,
\emph{  J. Algebra Appl.}, vol \textbf{3}, No. 1 (2004), 75-89.

\bibitem {issac} \textsc{T. Shaska}, Determining the automorphism group of hyperelliptic curves,
\emph{ Proceedings of the 2003 International Symposium on Symbolic and Algebraic Computation}, ACM Press, pg.
248 - 254, 2003.


\bibitem {We} \textsc{A. Weaver},
Hyperelliptic surfaces and their moduli.  Geom. Dedicata  103  (2004), 69--87.
\end{thebibliography}
\end{document}